\title{{\bf $L^p$ estimates for the maximal singular integral in terms of the singular integral }}
\author{\Large{\Large Anna Bosch-Cam\'{o}s, Joan Mateu and  Joan Orobitg }}
\newtheorem{teorema}{Theorem}
\newtheorem{co}{Corollary}
\newtheorem{lemma}[co]{Lemma}
\theoremstyle{definition}
\newtheorem{remark}{Remark}
\newtheorem*{gracies}{Acknowledgements}
\newcommand{\Rn}{{\mathbb R}^n}
\newcommand{\ep}{\varepsilon}
\newcommand{\BC}{\Rn \setminus \overline B}
\begin{document}

\date{}

\maketitle

\begin{abstract}

This paper continues the study, initiated in the works \cite{MOV} and \cite{MOPV}, 
of the problem of controlling the maximal singular integral $T^{*}f$ by the singular integral $Tf$.
Here $T$ is a smooth homogeneous Calder\'{o}n-Zygmund singular integral operator 
of convolution type. We consider two forms of control, namely, in
the weighted $L^p(\omega)$ norm and via pointwise estimates of $T^{*}f$ by
$M(Tf)$ or $M^2(Tf)$\,, where $M$ is the Hardy-Littlewood maximal
operator and $M^2=M \circ M$ its iteration. 
The novelty with respect to the aforementioned works, lies in the fact that here $p$ is different from $2$
and the $L^p$ space is weighted. 
\end{abstract}

\section{Introduction}

Let $T$ be a smooth homogeneous Calder\'on-Zygmund singular integral operator on $\mathbb R^n$ with kernel 
\begin{equation} \label{nucli}
K(x)=\frac{\Omega(x)}{|x|^n} \, \, \, \,  x\in \mathbb R^n\setminus \{0\},
\end{equation}
where $\Omega$ is a homogeneous function of degree 0 whose restriction 
to the unit sphere $S^{n-1}$ is $C^{\infty}$ and satisfies the cancellation property 
$$
\int_{|x|=1}\Omega(x)d\sigma(x)=0, 
$$
 $\sigma$ being the normalized surface measure in $S^{n-1}$. Thus, $Tf$ is the principal value convolution operator 
\begin{equation} \label{def}
Tf(x)= \text{p.v.} \int f(x-y)K(y)dy\equiv\lim_{\varepsilon \rightarrow 0}T^{\ep}f(x), 
\end{equation}
where $T^{\ep}f$ is the truncated operator at level ${\ep}$ defined by 
$$
T^{\ep}f(x)=\int_{|x-y|>\ep}f(x-y)K(y)dy. 
$$
For $f \in L^p, \, \, 1\le p < \infty$, the limit in \eqref{def} exits for almost all $x$. 
One says that the operator $T$ is even (or odd) if the kernel \eqref{nucli} is even (or odd), that is, if
$\Omega(-x)= \Omega(x)$, $x\in \mathbb R^n\setminus \{0\}$ (or $\Omega(-x)= -\Omega(x)$, $x\in \mathbb R^n\setminus \{0\}$).
Let $T^*$ be the maximal singular integral 
$$
T^*f(x)=\sup_{\ep >0}|T^{\ep}f(x)|, \, \, \, x\in \mathbb R^n.
$$
In this paper we consider the problem of characterizing those smooth Calder\'on-Zygmund  operators for which one can control 
 $T^*f$ by $Tf$ in the weighted $L^p$ norm 
\begin{equation}\label{pnorm}
\|T^*f\|_{L^p(\omega)}\le C\|Tf\|_{L^p(\omega)}, \, \,  \, \, f\in L^p(\omega), \text{ and } \omega \in A_p, 
\end{equation}
where $A_p$ is the Muckenhoupt class of weights (see below for the definition). A stronger way of saying 
that $T^*$ is controlled by $T$ is the pointwise inequality 
\begin{equation}\label{pointwise}
T^*f(x)\le C(M^s(Tf)(x)), \, \, x\in \mathbb R^n, \quad s\in\{ 1,2 \},
\end{equation}
where $M$ denotes the Hardy-Littlewood maximal operator and $M^2=M\circ M$ its iteration. 
For the case $p=2$ and $\omega=1$,  the relationship between \eqref{pnorm} and \eqref{pointwise} has been studied in 
\cite{MOV} for even kernels and in \cite{MOPV} for odd kernels (see also \cite {MV}).  We will 
prove that, for any $1<p<\infty$ and $\omega\in A_p$, the class of operators satisfying \eqref{pnorm}
coincides with the family of operators obtained for $p=2$ and $\omega=1$,
thus giving an affirmative answer to Question 1 of \cite[p. 1480]{MOV}.
Our main result states that for smooth Calder\'on-Zygmund  operators, inequality \eqref{pointwise} (with $s$ 
depending on the parity of the kernel) is equivalent to \eqref{pnorm} and also is equivalent 
to an algebraic condition involving the expansion of $\Omega$ in spherical harmonics. 

Now we need to introduce some notation. The homogeneous function $\Omega$, like any square-integrable function in 
$S^{n-1}$ with zero integral, has an expansion in spherical harmonics of the form

\begin{equation}\label{expansion}
\Omega(x)=\sum_{j=1}^{\infty}P_j(x), \, \, \, \, x\in S^{n-1}, 
\end{equation}
where $P_j$ is a homogeneous harmonic polynomial of degree $j$. For the case of even operators in the above sum we only have the 
even terms $P_{2j}$  and for the odd case we only have the polynomials of odd degree $P_{2j+1}$. In any case, when $\Omega$
is infinitely differentiable on the unit sphere one has that, for each positive integer $M$,
\begin{equation}\label{infinit}
\sum_{j=1}^{\infty}j^M \|P_j\|_{\infty} < \infty,
\end{equation}
where the supremum norm is taken on $S^{n-1}$. When $\Omega$ is of the form
$$
\Omega(x)=\frac{P(x)}{|x|^d}, \quad x \in \Rn \setminus \{0\}\,,
$$
with $P$ a homogeneous harmonic polynomial of degree $d \geq 1$, one says that $T$ is a higher order Riesz transform.  
If the homogeneous polynomial $P$ is not required to be harmonic, but
has still zero integral on the unit sphere, then we call~$T$~a polynomial operator.

Let's recall the definition of Muckenhoupt weights. Let $\omega$ be a non negative locally 
integrable function, and $1<p<\infty$. Then $\omega\in A_p$ if and only if there exits a constant $C$ such that for all cubes
$Q\subset\mathbb R^n$
$$
\left(\frac{1}{|Q|}\int_Q\omega\right) \left(\frac{1}{|Q|}\int_Q\omega^{-\frac{1}{p-1}}\right)^{p-1}\le C.
$$
The important fact worth noting is that Calder\'on-Zygmund  operators and the Hardy-Littlewood maximal operator are bounded  
on $L^p(\omega)$, when $1<p<\infty$ and $\omega$ belongs to $A_p$.
See \cite[Chapter 7]{Du} or \cite[Chapter 9]{Gr2} to get more information on weights.

Now we state our result. We start with the case of even operators.

\begin{teorema}\label{teo1}
 Let $T$ be an even smooth homogeneous
Calder\'on-Zygmund operator with kernel (\ref{nucli}). Then the
following are equivalent:
\begin{enumerate}[(a)]
\item $$T^{*}f(x) \leq CM(Tf)(x), \quad x\in \Rn.$$
\item If  $p\in (1,\infty)$ and $\omega\in A_p$, then 
$$||T^{*}f||_{L^p(\omega)} \leq C||Tf||_{L^p(\omega)}, \quad
\text{for all } f\in L^p(\omega). $$

\item Assume that the expansion \eqref{expansion}
of $\Omega$ in spherical harmonics is
$$
\Omega(x) = \sum_{j=j_0}^\infty P_{2j}(x),\quad P_{2j_0} \neq 0\,.
$$
Then, for each $j$ there exists a homogeneous polynomial
$Q_{2j-2j_0}$ of degree $2j-2j_0$ such that
$P_{2j}=P_{2j_0}Q_{2j-2j_0}$ and
$\sum_{j=j_0}^{\infty}\gamma_{2j}Q_{2j-2j_0}(\xi)\neq 0$, $\xi \in
S^{n-1}$. Here for a positive integer $k$ we have set 
\begin{equation}\label{gamma}
 \gamma_k=i^{-k}\pi^{\frac{n}{2}}\frac{\Gamma(\frac{k}{2})}{\Gamma(\frac{n+k}{2})}.
\end{equation}

\item  
$$||T^{*}f||_{1,\infty}\leq C||Tf||_1, \quad \text{for all }  f\in H^1(\Rn).
$$
\end{enumerate}
\end{teorema}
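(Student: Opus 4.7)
My plan is to close the cycle $(c)\Rightarrow(a)\Rightarrow(b)\Rightarrow(c)$ together with $(a)\Rightarrow(d)$ and $(d)\Rightarrow(c)$. The architecture leans on the main result of \cite{MOV}, which already established, for even kernels, the equivalence between the unweighted $L^2$ version of (b), the pointwise bound (a), and the algebraic condition (c). So the genuinely new content here is the passage to general $(p,\omega)$ and the inclusion of the endpoint (d).

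The implication $(c)\Rightarrow(a)$ is precisely the theorem of \cite{MOV} for even operators and requires no further work. For $(a)\Rightarrow(b)$, taking the $L^p(\omega)$ norm of the pointwise bound $T^{*}f\leq CM(Tf)$ and using that the Hardy--Littlewood maximal operator is bounded on $L^p(\omega)$ for every $\omega\in A_p$ and $1<p<\infty$ gives the desired estimate. For $(b)\Rightarrow(c)$, I specialize (b) to $p=2$ and $\omega\equiv 1$ and invoke the converse direction of \cite{MOV}; this step is essentially a logical tautology once (b) is understood as holding for \emph{every} admissible pair $(p,\omega)$. For $(a)\Rightarrow(d)$, I combine the standard fact that $T:H^1(\Rn)\to L^1(\Rn)$ is bounded with the weak type $(1,1)$ inequality for $M$, giving $\|T^{*}f\|_{1,\infty}\leq C\|M(Tf)\|_{1,\infty}\leq C'\|Tf\|_1$.

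The main obstacle will be $(d)\Rightarrow(c)$. The hypothesis is the weakest of the four, yet one still has to force both the rigid factorization $P_{2j}=P_{2j_0}Q_{2j-2j_0}$ and the non-vanishing of $\sum_{j\geq j_0}\gamma_{2j}Q_{2j-2j_0}(\xi)$ on $S^{n-1}$. I would attempt this by testing the weak-type estimate against families of $H^1$-atoms, suitably rescaled and oscillating so as to isolate individual spherical harmonics in the expansion of $\Omega$, while carefully tracking the truncated operators $T^{\ep}f$ as $\ep\to 0$. The expected mechanism is that whenever (c) fails---either because some $P_{2j}$ is not a multiple of $P_{2j_0}$, or because the multiplier $\sum\gamma_{2j}Q_{2j-2j_0}$ has a zero on $S^{n-1}$---one should be able to manufacture an atom whose $Tf$ has tiny $L^1$ norm while $T^{*}f$ is large on a set of non-negligible measure, contradicting (d). This construction, which must substitute for the $L^2$ Plancherel computations and the strong-type weighted machinery available in the other directions, is the step I expect to be the most delicate.
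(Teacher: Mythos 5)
Your cycle $(c)\Rightarrow(a)\Rightarrow(b)$ together with $(a)\Rightarrow(d)$ matches the paper: $(c)\Rightarrow(a)$ is quoted from \cite{MOV}, $(a)\Rightarrow(b)$ uses the $A_p$-boundedness of $M$, and $(a)\Rightarrow(d)$ uses $T:H^1\to L^1$ plus the weak $(1,1)$ bound for $M$. The two remaining implications are where the proposal has genuine gaps.

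For $(b)\Rightarrow(c)$ you read (b) as universally quantified in $(p,\omega)$, specialize to $p=2$, $\omega\equiv1$, and invoke \cite{MOV}. But the whole point of the theorem --- explicitly, the answer to Question 1 of \cite{MOV} --- is that the inequality for a \emph{single} pair $(p,\omega)$ with $p\neq2$ already forces (c); that is what the paper proves, and your tautological reduction does not. For $p\neq 2$ one cannot use Plancherel to convert the hypothesis $\|S\chi_B\ast f\|_{L^p(\omega)}\leq C\|\text{p.v.}\,K\ast f\|_{L^p(\omega)}$ (obtained from the identity $K\chi_{\Rn\setminus\overline B}=T(b)+S\chi_B$) into the pointwise multiplier inequality $|\widehat{S\chi_B}(\xi)|\leq C|\widehat{\text{p.v.}\,K}(\xi)|$. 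The paper's substitute is a localization lemma proved via the Kurtz--Wheeden weighted multiplier theorem: if $m\in\mathcal M_p\cap\mathcal C^n$ near $\xi_0$ and $m(\xi_0)=0$, then $\|T_{m\phi_\delta}\|_{L^p(\omega)\to L^p(\omega)}\to0$ as $\delta\to0$; testing the hypothesis on functions $g_\delta$ with $\widehat{g_\delta}=\phi_\delta$ then yields the pointwise inequality at every $\xi_0\neq0$, after which the algebraic analysis of \cite{MOV} applies. None of this machinery appears in your plan.

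For $(d)\Rightarrow(c)$ you offer only a hoped-for contrapositive construction with $H^1$-atoms, which you acknowledge you do not have; as written this implication is simply not proved. The paper does not argue by contradiction: it runs the same multiplier localization at the endpoint, using that the test functions $g_\delta$ lie in $H^1$ with $\|g_\delta\|_{H^1}\leq C$ uniformly in $\delta$ (this requires a separate atomic-decomposition lemma), together with the $H^1\to L^1$ and $L^1\to L^{1,\infty}$ versions of the localization lemma, to reach the same pointwise inequality $|\widehat{S\chi_B}(\xi_0)|\leq C|\widehat{\text{p.v.}\,K}(\xi_0)|$. Finally, note that both implications are first established for kernels with finite spherical-harmonics expansion and then extended to general smooth $\Omega$ by truncating the series, which requires quantitative bounds on the auxiliary functions $b_{2N}$; this second layer is also missing from your proposal.
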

Recall that $|| g ||_{1,\infty}$ denotes the weak $L^1$ norm of $g$ and $H^1(\Rn)$ is the Hardy space. 
Calder\'on-Zygmund  operators act on $H^1$. (For instance, see \cite[Chapter 6]{Du}, 
\cite[Chapter 7]{Gr2} for more information on the Hardy space).

To get the above result for odd kernels we will replace the Hardy-Littlewood maximal operator in \textit{(a)} by its
iteration.
\begin{teorema}\label{teo2}
 Let $T$ be an odd smooth homogeneous Calder\'on-Zygmund operator with kernel (\ref{nucli}). 
Then the following are equivalent:
\begin{enumerate}[(a)]
\item $$T^{*}f(x) \leq CM^2(Tf)(x), \quad x\in \Rn.$$
\item If $p\in (1,\infty)$ and $\omega\in A_p$ then 
$$||T^{*}f||_{L^p(\omega)} \leq C||Tf||_{L^p(\omega)}, \quad
\text{for all } f\in L^p(\omega).$$ 
\item Assume that the expansion \eqref{expansion}
of $\Omega$ in spherical harmonics is
$$
\Omega(x) = \sum_{j=j_0}^\infty P_{2j+1}(x),\quad P_{2j_0+1} \neq 0\,.
$$
Then, for each $j$ there exists a homogeneous polynomial
$Q_{2j-2j_0}$ of degree $2j-2j_0$ such that
$P_{2j+1}=P_{2j_0+1}Q_{2j-2j_0}$ and
$\sum_{j=j_0}^{\infty}\gamma_{2j+1}Q_{2j-2j_0}(\xi)\neq 0$, $\xi \in
S^{n-1}$,
with $\gamma_{2j+1}$ as in  \eqref{gamma}.
\end{enumerate}
\end{teorema}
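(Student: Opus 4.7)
My plan is to establish the circle of implications (a) $\Rightarrow$ (b) $\Rightarrow$ (c) $\Rightarrow$ (a). Since neither (a) nor (c) involves $p$ or $\omega$, and since their equivalence for odd smooth kernels has already been established in the unweighted $L^2$ setting in \cite{MOPV}, the genuine novelty of Theorem \ref{teo2} lies in incorporating the weighted $L^p$ estimate (b). This is achieved by sandwiching (b) between the pointwise statement (a) and the $p=2$, $\omega\equiv 1$ specialization of (b) itself.

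First I would prove (a) $\Rightarrow$ (b), which is the only implication that genuinely involves weighted norms. By Muckenhoupt's theorem, for $1<p<\infty$ and $\omega \in A_p$ the Hardy-Littlewood maximal operator $M$ is bounded on $L^p(\omega)$, and hence so is its iterate $M^2 = M \circ M$. Integrating the pointwise inequality from (a) in $L^p(\omega)$ then yields
$$
\|T^{*}f\|_{L^p(\omega)} \leq C\,\|M^2(Tf)\|_{L^p(\omega)} \leq C'\,\|Tf\|_{L^p(\omega)},
$$
which is precisely (b).

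For (b) $\Rightarrow$ (c) I would exploit the universality built into the hypothesis, namely that it is assumed for \emph{every} $p \in (1,\infty)$ and every $\omega \in A_p$. Specializing to $p=2$ and the trivial weight $\omega \equiv 1 \in A_2$ yields the unweighted estimate $\|T^{*}f\|_2 \leq C\|Tf\|_2$ for all $f \in L^2(\Rn)$. By the main theorem of \cite{MOPV} for odd smooth Calder\'on-Zygmund operators, this $L^2$ control is equivalent to the algebraic condition (c).

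Finally, (c) $\Rightarrow$ (a) would be obtained by reproducing the Riesz-transform argument of \cite{MOPV}: under the factorization $P_{2j+1}=P_{2j_0+1}Q_{2j-2j_0}$, the Fourier multiplier of $T$ factors as $m_1(\xi)\,m_2(\xi)$, where $m_1(\xi) = P_{2j_0+1}(\xi)/|\xi|^{2j_0+1}$ is the symbol of the higher order Riesz transform $R_{P_{2j_0+1}}$ and $m_2(\xi) = \sum_{j \geq j_0} \gamma_{2j+1} Q_{2j-2j_0}(\xi)/|\xi|^{2j-2j_0}$ is homogeneous of degree zero, smooth on $S^{n-1}$ thanks to \eqref{infinit}, and non-vanishing by the hypothesis of (c). Combining the pointwise estimate $R^{*}_{P_{2j_0+1}}g \leq CM^{2}(R_{P_{2j_0+1}}g)$ from \cite{MOPV} with the $L^\infty$-invertibility of $m_2$ on the sphere (and hence the $L^p(\omega)$-boundedness of the associated inverse multiplier) delivers the required pointwise control of $T^{*}$. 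The main obstacle is precisely this last step, which relies on the delicate higher-order Riesz-transform analysis of \cite{MOPV}; by contrast, the weighted-$L^p$ component we add reduces cleanly to a standard Muckenhoupt-boundedness argument together with the specialization to $(p,\omega)=(2,1)$.
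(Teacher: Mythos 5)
Your implications (a) $\Rightarrow$ (b) and (c) $\Rightarrow$ (a) match the paper: the first is the boundedness of $M$ (hence $M^2$) on $L^p(\omega)$ for $\omega\in A_p$, and the second is simply quoted from \cite{MOPV}, exactly as the authors do. The problem is your treatment of (b) $\Rightarrow$ (c). You read (b) as universally quantified over $(p,\omega)$ and then specialize to $p=2$, $\omega\equiv 1$, handing the implication back to \cite{MOPV}. While this is formally consistent with the literal wording of (b), it proves only a vacuous strengthening of the known $L^2$ result and completely bypasses the content of the theorem: the paper's proof of (b) $\Rightarrow$ (c) uses the inequality $\|T^*f\|_{L^p(\omega)}\le C\|Tf\|_{L^p(\omega)}$ for a \emph{single} fixed $p\in(1,\infty)$ and a \emph{single} $\omega\in A_p$, so that the theorem actually characterizes the operators satisfying the estimate for any one choice of $(p,\omega)$ --- this is the announced answer to Question 1 of \cite{MOV}, and it is why the abstract stresses that ``$p$ is different from $2$ and the $L^p$ space is weighted.'' Your circle of implications does not yield that statement, and it renders Sections 2--4 of the paper (which exist precisely because Plancherel is unavailable when $p\ne 2$) superfluous, which should have been a warning sign.

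What the paper actually does for (b) $\Rightarrow$ (c), with $(p,\omega)$ fixed: write $K\chi_{\Rn\setminus\overline B}=T(b)+S\chi_B$ via the fundamental-solution identity \eqref{expressio}, deduce $\|S\chi_B*f\|_{L^p(\omega)}\le C\|\mathrm{p.v.}\,K*f\|_{L^p(\omega)}$ (in the odd case $b$ is only a BMO function with decay $|b(x)|\le C|x|^{-n-1}$, and one needs $|(b*Tf)(x)|\le CM^2(Tf)(x)$ via $M_{L(\log L)}\simeq M^2$ --- this is where the iterated maximal function enters), and then convert this single operator-norm inequality into the pointwise multiplier bound $|\widehat{S\chi_B}(\xi_0)|\le C\,|\widehat{\mathrm{p.v.}\,K}(\xi_0)|$ by testing against functions $g_\delta$ with $\widehat{g_\delta}=\phi_\delta$ concentrated near $\xi_0$ and invoking Lemma \ref{L1} (built on the Kurtz--Wheeden weighted multiplier theorem) to show that the error multipliers $E_j\phi_{2\delta}$ have operator norm tending to $0$. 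From that pointwise inequality the algebraic condition (c) follows as in \cite{MOV}, \cite{MOPV}. None of this machinery appears in your proposal, so the key step is missing.
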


Clearly, both in Theorem \ref{teo1}  as in Theorem \ref{teo2}, the condition \textit{(a)} implies \textit{(b)}
is a consequence of the boundedness of the Hardy-Littlewood maximal operator on weighted $L^p$ spaces. 
The proof of \textit{(c)} implies \textit{(a)} in Theorem \ref{teo1} is proved in \cite{MOV} and the same implication
in Theorem \ref{teo2} is proved in  \cite{MOPV}.
So the only task to be done is to show that \textit{(b)} implies \textit{(c)} in both theorems 
(and $(d) \Rightarrow (c)$ in Theorem \ref{teo1}).
One of the crucial points in the proof of the implication $(b) \Rightarrow (c)$ for the case $p=2$ and $\omega=1$ in  
\cite{MOV} and \cite{MOPV} is to use Plancherel Theorem 
to get a pointwise inequality to work with it. For $p\neq2$ we will get the corresponding pointwise inequality using 
properties of the Fourier transform of the kernels as $L^p$ multipliers. 

In Section \ref{sec2} we introduce $L^p$ Fourier multipliers and some tools to control their norm (see Lemma \ref{L1}).
Section \ref{sec3} is devoted to the proof of $(b) \Rightarrow (c)$, for polynomial operators. The general case is discussed in Section
\ref{sec4}.

As usual, the letter $C$ will denote a constant, which may be different at each occurrence and which is independent 
of the relevant variables under consideration.

\section{Multipliers}\label{sec2}

Recall that, given $1\le p<\infty$, one denotes by $\mathcal M_p(\mathbb R^n )$ the space of all bounded
functions $m$ on $\mathbb R^n$ such that the operator
$$
T_m ( f ) = ( \hat f \; m)^{\vee} ,\quad  f\in\mathcal{S} ,
$$
is bounded on $L^p(\mathbb R^n )$ (or is initially defined in a dense subspace of $L^p(\mathbb R^n )$ and has
a bounded extension on the whole space). As usual, $\mathcal{S}$ denotes the space of Schwartz functions, $\hat f$ is the
Fourier transform of $f$ and $f^{\vee}$ the inverse Fourier transform.
The norm of $m$ in $\mathcal M_p(\mathbb R^n )$ is defined as
the norm of the bounded linear operator $T_m\, : L^p(\mathbb R^n )\to L^p(\mathbb R^n )$.
Elements of the space $\mathcal M_p(\mathbb R^n )$ are called  $L^p$ (Fourier) multipliers. Similarly, we speak of 
$L^p(\omega)$ multipliers.
It is well known that 
$\mathcal M_2$, the set of all $L^2$ multipliers, is $L^{\infty}$ and that $\mathcal M_1(\mathbb R^n )$ is the set of
Fourier transforms of finite Borel measures on $\mathbb R^n$. The basic theory on multipliers 
may be found for example in the monographs \cite{Du},\cite{Gr1}.

Let $0\leq \phi \leq 1$ be an smooth function such that $\phi(\xi)=1$ if $|\xi |\leq \frac{1}{2}$, and $\phi(\xi)=0$ 
if $|\xi|\geq 1$. Given $\xi_0\in\mathbb{R}^n $, we define $\phi_{\delta}(\xi)=\phi(\frac{\xi- \xi_0}{\delta})$. 
Consider $m\in L^{\infty}$ such that
$m$ is continuous in some neighbourhood of $\xi_0$ with $m(\xi_0) =0$. It is clear, by Plancherel Theorem, that the norm
of $m \phi_{\delta}$ in $\mathcal M_2$ approaches zero when $\delta\to 0$. We ask if the same result holds when $m$ is
an $L^p$ multiplier. Adding some regularity to $m$ we get a positive answer.

\begin{lemma}\label{L1}
Let $\xi_0\in \mathbb{R}^n$, $0< \delta\le \delta_0$ and  $m\in\mathcal M_p\cap \mathcal{C}^n(B(\xi_0,\delta_0))$ with $m(\xi_0)=0$.
 Let $\phi\in \mathcal{C}^{\infty}(\Rn)$, $0\leq \phi \leq 1$ such that $\phi(\xi)=1$ if $|\xi |\leq \frac{1}{2}$, and $\phi(\xi)=0$ 
if $|\xi|\geq 1$. Set $\phi_{\delta}(\xi)=\phi(\frac{\xi- \xi_0}{\delta})$ and
let $T_{m \phi_{\delta}}$ be the operator with multiplier $m \phi_{\delta}$.
\begin{enumerate}
 \item If $\omega\in A_p$, $1<p<\infty$, then $||T_{m \phi_{\delta}}||_{L^p(\omega)\rightarrow L^p(\omega)}
 \longrightarrow 0$, when $\delta\rightarrow 0$.
 \item $||T_{m \phi_{\delta}}||_{L^1 \rightarrow L^{1,\infty}}\longrightarrow 0$,
 when $\delta\rightarrow 0$.
 \item $||T_{m \phi_{\delta}}||_{H^1\rightarrow L^1}\longrightarrow 0$,  when $\delta\rightarrow 0$.
  
\end{enumerate}
\end{lemma}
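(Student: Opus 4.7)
The three statements will be obtained by combining the trivial $L^2\to L^2$ bound, whose norm tends to zero, with uniform bounds in the relevant target spaces, and then interpolating. Set $\varepsilon_\delta:=\sup_{B(\xi_0,\delta)}|m|$. Since $m$ is continuous at $\xi_0$ and $m(\xi_0)=0$, one has $\varepsilon_\delta\to 0$ as $\delta\to 0$, so Plancherel gives
\[
\|T_{m\phi_\delta}\|_{L^2\to L^2}=\|m\phi_\delta\|_\infty\le \varepsilon_\delta\longrightarrow 0.
\]

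\textbf{Uniform bounds in the target spaces.} After the rescaling $h_\delta(\eta):=m(\xi_0+\delta\eta)\,\phi(\eta)$, the symbol factors as $m\phi_\delta(\xi)=h_\delta((\xi-\xi_0)/\delta)$, with $h_\delta\in C^\infty_c(B(0,1))$ enjoying $C^n$ estimates uniform in $\delta\le\delta_0$. The convolution kernel of $T_{m\phi_\delta}$ is then
\[
K_\delta(x)=\delta^n e^{2\pi i\xi_0\cdot x}\,h_\delta^\vee(\delta x).
\]
Standard integration-by-parts arguments using the uniform $C^n$ control of $h_\delta$ and its compact support translate into size and smoothness estimates for $K_\delta$ uniform in $\delta$; these are sufficient to obtain the pointwise domination $|T_{m\phi_\delta}f(x)|\le C\,Mf(x)$, from which the $A_p$ theory of $M$ yields the uniform bound $\|T_{m\phi_\delta}\|_{L^p(\omega)\to L^p(\omega)}\le C$ for every $\omega\in A_p$. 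Classical Calder\'on--Zygmund theory applied to the smooth kernel $K_\delta$ simultaneously gives uniform $L^1\to L^{1,\infty}$ and $H^1\to L^1$ bounds.

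\textbf{Interpolation and main obstacle.} For (1), I would first interpolate the unweighted $L^2$ bound (of norm $\varepsilon_\delta$) against the uniform $L^2(\omega)$ bound via Stein's complex interpolation with change of measure, $[L^2,L^2(\omega)]_\theta=L^2(\omega^\theta)$. The openness of the $A_2$ class (self-improvement via the reverse H\"older inequality) implies that every $\tilde\omega\in A_2$ can be written as $\tilde\omega=\omega^\theta$ with $\omega\in A_2$ and $\theta\in(0,1)$, so $\|T_{m\phi_\delta}\|_{L^2(\tilde\omega)\to L^2(\tilde\omega)}\to 0$; Rubio de Francia's extrapolation theorem then transfers the vanishing norm to every $L^p(\omega)$ with $1<p<\infty$ and $\omega\in A_p$. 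For (2) and (3), Marcinkiewicz interpolation combines the decaying $L^2\to L^2$ norm with the respective uniform endpoint bound to produce vanishing $L^1\to L^{1,\infty}$ and $H^1\to L^1$ norms. The main obstacle is the weighted interpolation in part (1): one cannot reach $L^p(\omega)$ directly from unweighted $L^2$ by complex interpolation, and the detour through weighted $L^2$ combined with extrapolation is essential, relying crucially on the self-improving property of Muckenhoupt weights.
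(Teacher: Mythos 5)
The decisive step of your argument fails for parts (2) and (3). You propose to obtain the vanishing of $\|T_{m\phi_\delta}\|_{L^1\to L^{1,\infty}}$ and $\|T_{m\phi_\delta}\|_{H^1\to L^1}$ by ``Marcinkiewicz interpolation'' between the decaying $L^2\to L^2$ norm and a uniform endpoint bound, but interpolation runs in the wrong direction here: $L^1\to L^{1,\infty}$ and $H^1\to L^1$ are themselves the endpoints, not intermediate between $L^2\to L^2$ and anything else, so no interpolation theorem can make those norms small. A uniform endpoint bound together with a small $L^2$ norm yields only a uniform (not vanishing) endpoint constant; indeed, for the dilated–modulated kernels $K_\delta$ the H\"ormander constant $\sup_y\int_{|x|>2|y|}|K_\delta(x-y)-K_\delta(x)|\,dx$ stays bounded away from improvement as $\delta\to 0$, so the Calder\'on--Zygmund weak-$(1,1)$ machinery gives $C+\varepsilon_\delta$, not $o(1)$. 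The smallness must be injected at the level of the multiplier condition itself. This is exactly what the paper does: it verifies by a Leibniz-rule computation that $m\phi_\delta$ belongs to the Kurtz--Wheeden class $M(s,n)$ for some $1<s<\frac{n}{n-1}$ with constant $m_{s,n}\lesssim \delta^{\,1+\frac{n}{s}-n}\to 0$ (here the measure $\delta^n$ of $B(\xi_0,\delta)$ and the Lipschitz bound $|m(\xi)|\lesssim\delta$ on that ball beat the losses $\delta^{-|\beta|s}$ from differentiating $\phi_\delta$), and then invokes the Kurtz--Wheeden theorem, whose $L^p(\omega)$, weak-$(1,1)$ and $H^1\to L^1$ constants all depend linearly on $m_{s,n}$. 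All three statements then follow simultaneously.

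Two further remarks. First, your claimed pointwise domination $|T_{m\phi_\delta}f|\le C\,Mf$ is not justified by the $C^n$ regularity you have: $n$-fold integration by parts only gives $|h_\delta^\vee(y)|\lesssim(1+|y|)^{-n}$, so $|K_\delta(x)|\lesssim \delta^n(1+\delta|x|)^{-n}$, which is not integrable and hence does not furnish a radially decreasing integrable majorant; the uniform weighted bounds you need should instead be taken from the H\"ormander--Kurtz--Wheeden condition satisfied uniformly in $\delta$. Second, your route to part (1) --- unweighted $L^2$ with vanishing norm, uniform weighted bounds, Stein--Weiss interpolation with change of measure, self-improvement of $A_2$, and Rubio de Francia extrapolation --- is essentially sound once the uniform bounds are repaired, and it closely parallels the paper's alternative Lemma 2 (due to Duoandikoetxea), which interpolates a vanishing unweighted $L^p$ bound against a uniform $L^p(\omega^s)$ bound using the openness of $A_p$. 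But that device is intrinsically limited to the open range $1<p<\infty$; it cannot reach the endpoint statements, which is why the paper's main proof goes through $M(s,n)$.
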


To prove Lemma \ref{L1}, we use the next theorem due to Kurtz and Wheeden. 
Following \cite{KW}, we say that a function $m$ belongs to the class $M(s,l)$ if
\begin{equation}\label{msl}
 m_{s,l}:=\sup_{R>0}\left(R^{s|\alpha|-n}\int_{R<|x|<2R}|D^{\alpha}m(x)|^s dx\right)^{1/s}<+\infty, 
\mbox{    for all }|\alpha|\leq l,
\end{equation}
where $s$ is a real number greater or equal to $1$, $l$ a
positive integer and $\alpha=(\alpha_1,\ldots,\alpha_n)$ a
multiindex of nonnegative integers.

\begin{teorema}{\cite[p. 344]{KW}}\label{Kurtz}
Let $1<s\leq 2$ and $m\in M(s,n)$.
\begin{enumerate}
\item If $1<p<\infty$ and $\omega \in A_{p}$, then there exists a constant $C$, independent of $f$,
 such that
$$||T_m f||_{L^p(\omega)}\leq C||f||_{L^p(\omega)}.$$
\item There exists a constant $C$, independent of $f$ and $\lambda$, such that
$$ |\{x\in \mathbb R^n : |T_mf(x) |> \lambda\} |\leq \frac{C}{\lambda}\|f\|_{L^1}, \quad \lambda>0.$$
\item  There exists a constant $C$, independent of $f$, such that
$$||T_mf||_{L^1}\leq C||f||_{H^1}.$$
\end{enumerate}
\end{teorema}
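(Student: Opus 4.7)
The proof follows the classical Calder\'on-Zygmund paradigm: one reduces the three conclusions to $L^2$-boundedness of $T_m$ together with a H\"ormander-type regularity condition on the convolution kernel $K=m^{\vee}$, then invokes standard theory to pass to weighted $L^p$, to weak type $(1,1)$ via the Calder\'on-Zygmund decomposition, and to $H^1 \to L^1$ via the atomic decomposition of $H^1$. The hypothesis $m \in M(s,n)$ is engineered to deliver exactly these kernel estimates through a Littlewood-Paley decomposition in frequency. That $T_m$ is bounded on $L^2(\mathbb R^n)$ is immediate: the condition $M(s,n)$ together with Sobolev embedding on each dyadic annulus yields $m \in L^\infty$, so Plancherel applies.

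The core step is the derivation of the H\"ormander condition. Fix a smooth radial partition of unity $\{\psi_j\}_{j\in\mathbb Z}$ with $\mathrm{supp}\,\psi_j \subset \{2^{j-1} \leq |\xi| \leq 2^{j+1}\}$, write $m = \sum_j m_j$ with $m_j = m\psi_j$, and set $K_j = m_j^{\vee}$. The Leibniz rule combined with $|D^\beta \psi_j| \lesssim 2^{-j|\beta|}$ and \eqref{msl} gives, for each $|\alpha| \leq n$,
\begin{equation*}
\|D^\alpha m_j\|_{L^s(\mathbb R^n)} \leq C\, 2^{j(n/s - |\alpha|)}\, m_{s,n}.
\end{equation*}
Since $(-2\pi i x)^\alpha K_j(x)$ is the inverse Fourier transform of $D^\alpha m_j$, Hausdorff-Young ($1 < s \leq 2$) yields
\begin{equation*}
\bigl\| |x|^{|\alpha|} K_j \bigr\|_{L^{s'}(\mathbb R^n)} \leq C\, 2^{j(n/s - |\alpha|)}, \qquad |\alpha| \leq n.
\end{equation*}
Interpolating between $|\alpha| = 0$ and $|\alpha| = n$ on spatial dyadic annuli $|x| \sim 2^k$ and applying H\"older (with annular measure $\sim 2^{kn}$) produces bounds on $\|K_j\|_{L^1(|x|\sim 2^k)}$ decaying rapidly in $|j + k|$, and an analogous argument applied to $\nabla m_j$ controls the differences $K_j(x - y) - K_j(x)$. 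Summing in $j$ yields the classical H\"ormander condition
\begin{equation*}
\sup_{y \neq 0} \int_{|x| > 2|y|} |K(x - y) - K(x)|\, dx < \infty.
\end{equation*}

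With the pair ($L^2$-boundedness, H\"ormander condition) in hand, part (2) follows from the Calder\'on-Zygmund decomposition in the standard way, and part (3) by testing on $H^1$-atoms $a$ supported in a ball $B$: away from $2B$ the mean-zero hypothesis $\int a = 0$ together with the kernel regularity gives uniform control on $\|T_m a\|_{L^1(\mathbb R^n \setminus 2B)}$, while the piece on $2B$ is handled by Cauchy-Schwarz and the $L^2$-bound. For part (1), the $L^p(\omega)$ estimate, one combines the H\"ormander condition with the Fefferman-Stein sharp maximal inequality and the Coifman-Fefferman good-$\lambda$ method to obtain a pointwise bound of the form $M^{\#}(T_m f)(x) \leq C M_s(f)(x)$, where $M_s(f) = M(|f|^s)^{1/s}$; since $M_s$ is bounded on $L^p(\omega)$ for every $\omega \in A_{p/s}$, a preliminary selection of $s$ sufficiently close to $1$ (using the openness of the $A_p$ class) yields the full range $1 < p < \infty$, $\omega \in A_p$.

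The main obstacle is the delicate balancing of scales in the kernel estimate when $1 < s < 2$. Hausdorff-Young is lossy for $s < 2$, and one must verify that the gains from differentiating $n$ times in frequency (exactly matching the spatial dimension) cancel the losses incurred in converting the $L^{s'}$ bounds back to $L^1$ on each dyadic spatial annulus and in summing over $j$. The threshold $|\alpha| \leq n$ in the hypothesis is precisely what accommodates this accounting; with strictly fewer derivatives available the resulting geometric series in $k$ (or $j$) would fail to converge, and the H\"ormander condition would not follow.
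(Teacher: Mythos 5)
You should first note that the paper does not prove this theorem at all: it is quoted from Kurtz--Wheeden \cite{KW}, and the authors only add two remarks (that the constant depends linearly on $m_{s,n}$, and that the $H^1\to L^1$ statement, not explicit in \cite{KW}, follows by adapting the argument). So the comparison is with the proof in \cite{KW}, whose architecture is essentially the one you sketch: dyadic decomposition $m=\sum_j m\psi_j$, Leibniz plus \eqref{msl}, Hausdorff--Young to turn frequency-side $L^s$ bounds into space-side $L^{s'}$ bounds on the kernel pieces, then Calder\'on--Zygmund machinery. Your treatment of parts (2) and (3) is correct in outline: $M(s,n)$ with $1<s\le 2$ gives $m\in L^\infty$ (hence $L^2$ boundedness) and, since $n>n/s$, the classical H\"ormander condition by exactly the computation you indicate, after which weak $(1,1)$ and the $H^1\to L^1$ bound on atoms are standard.

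The genuine gap is in part (1). The pointwise estimate $M^{\#}(T_mf)\le C\,M_s(f)$ does \emph{not} follow from the $L^1$ H\"ormander condition you derived in your core step: H\"ormander's condition alone implies neither a sharp-maximal-function bound of this type nor $L^p(\omega)$ bounds for general $\omega\in A_p$ (there are kernels satisfying H\"ormander's condition for which such weighted estimates fail). What Kurtz--Wheeden actually prove, and what your argument must retain, is the stronger $L^{s'}$ smoothness of the kernel on annuli, i.e.\ bounds for $\bigl(\int_{|x|\sim 2^k|y|}|K(x-y)-K(x)|^{s'}\,dx\bigr)^{1/s'}$ multiplied by $(2^k|y|)^{n/s}$ and summed in $k$; this does come out of your Hausdorff--Young computation provided you do not apply H\"older to pass to $L^1$ on each annulus, and it is this estimate (via H\"older against $\|f\|_{L^s}$ on annuli) that yields $M^{\#}(T_mf)\lesssim (M|f|^{t})^{1/t}$. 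Relatedly, your ``choose $s$ close to $1$'' step conflates the fixed exponent $s$ of the hypothesis with the exponent in $M_t$: it is legitimate only after observing (by H\"older on each annulus) that $M(s,n)\subset M(t,n)$ with $m_{t,n}\le C\,m_{s,n}$ for $1<t<s$, so that the sharp-function bound holds with any $t\in(1,s]$; then $t$ close to $1$, openness of $A_p$, and the boundedness of $M_t$ on $L^p(\omega)$ for $\omega\in A_{p/t}$, $p>t$, give the full range $1<p<\infty$, $\omega\in A_p$. Finally, for the use the paper makes of this theorem in Lemma \ref{L1} you should also track that every constant so obtained is linear in $m_{s,n}$, which your scheme does deliver since $m_{s,n}$ enters each kernel estimate linearly.
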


Analyzing the proof we check that, in all cases, the constant $C$,
which appears in the statements \textit{1}, \textit{2} and  \textit{3} of the previous Theorem, 
depends linearly on the constant $m_{s,n}$ defined at \eqref{msl}.
We also remark that when $\omega=1$ the
proof can be adapted to the case $H^1\rightarrow L^1$, so we get
statement \textit{3} which is not explicitly written in \cite{KW}.\\

\begin{proof}[Proof of Lemma \ref{L1}]

Using Theorem \ref{Kurtz} we only need to prove that the multiplier $m\phi_{\delta}$ is in 
$M(s,n)$ for some $1<s\le 2,$ and the constant $m_{s,n}$ tends to $0$ if $\delta$ tends to $0$.

Assume that $\xi_0 \ne 0$ and that $\delta<\delta_0$ is small enough.
For $|\alpha|\leq n$, using Leibniz rule one has  

\begin{align*}
&\quad\sup_{R>0}\left(R^{s|\alpha|-n}\int_{R<|\xi|<2R}|D^{\alpha}(m
\phi_{\delta})(\xi)|^s d\xi\right)^{1/s}\\
&= \sup_{R>0}\left(R^{s|\alpha|-n}\int_{\{R<|\xi|<2R\}\cap B(\xi_0, \delta)}|D^{\alpha}(m \phi_{\delta})(\xi)|^s d\xi\right)^{1/s}\\
 &\leq C|\xi_0|^{|\alpha|-\frac{n}{s}}\left(\int_{B(\xi_0, \delta)}|D^{\alpha}(m \phi_{\delta})(\xi)|^s d\xi\right)^{1/s}\\
&\leq C |\xi_0|^{|\alpha|-\frac{n}{s}}
\left(\sum_{\beta_{i}\le \alpha_{i}, \, 1\le i\le n}\binom{\alpha_1}{\beta_1}\binom{\alpha_2}{\beta_2}\cdots\binom{\alpha_n}{\beta_n}
\int_{B(\xi_0,
\delta)}|D^{\alpha-\beta}(m)(\xi)D^{\beta}(\phi_{\delta})(\xi)|^s
d\xi\right)^{1/s}.
\end{align*}

Now we will get a bound for each term in the above sum. In order to get it, we consider
different cases. In all the cases we will use that for any multiindex $\alpha$ we have  $|D^{\alpha}\phi_{\delta}(\xi)|\lesssim
\frac{1}{\delta^{|\alpha|}}$ and  that the modulus of continuity of $m,$ denoted by $\omega(m,\xi_0, \delta)$, satisfies
$\omega(m,\xi_0, \delta)\le C\delta$.

Case 1. $|\alpha|=n$.

For $\beta=\alpha$ one has that 

\begin{align*}
\int_{B(\xi_0, \delta)}|D^{\alpha-\beta}(m)(\xi)D^{\beta}(\phi_{\delta})(\xi)|^s d\xi &=
 \int_{B(\xi_0, \delta)}|m(\xi)|^{s}|D^{\alpha}(\phi_{\delta})(\xi)|^s d\xi \\
 &\leq C \frac{1}{\delta^{ns}}|\omega(m,\xi_0, \delta)|^s \delta^n\\
&\le C\delta^{s+n-ns} 
\end{align*}
and this term tends to $0$ as $\delta$ tends to $0$ taking $1<s<\frac{n}{n-1}$. 
For the remaining terms, that is $\alpha\neq\beta$, we have 

\begin{align*}
\int_{B(\xi_0, \delta)}|D^{\alpha-\beta}(m)(\xi)D^{\beta}(\phi_{\delta})(\xi)|^s d\xi
&\leq C \frac{1}{\delta^{|\beta|s}}\delta^n \\
&=C \delta^{n-s|\beta|}\\
&\le C\delta^{s+n-ns},
\end{align*}
where the derivatives of $m$ are bounded by a constant, and the last inequality holds when $\delta$ is small enough.
So, if $1<s<\frac{n}{n-1}$, this term goes to $0$ as $\delta$ goes to $0.$

Case 2. $|\alpha|=k<n$.

For $|\beta|=|\alpha|$, using the boundedness of the modulus of continuity of $m$ we have

\begin{align*}
\int_{B(\xi_0, \delta)}|D^{\alpha-\beta}(m)(\xi)D^{\beta}(\phi_{\delta})(\xi)|^s d\xi
&= \int_{B(\xi_0, \delta)}|m(\xi)|^{s}|D^{\alpha}(\phi_{\delta})(\xi)|^s d\xi \\
&\leq C\frac{1}{\delta^{ks}}|\omega(m,\xi_0, \delta)|^s \delta^n \\
&= C\delta^{s+n-ks}\\
&\le C\delta^{s+n-ns}\
\end{align*}

and this term, again, goes to $0$ as $\delta$ goes to $0$, whenever $1<s<\frac{n}{n-1}$. 

Finally, if  $|\beta|<|\alpha|$, one gets the same bound

\begin{align*}
\int_{B(\xi_0, \delta)}|D^{\alpha-\beta}(m)(\xi)D^{\beta}(\phi_{\delta})(\xi)|^s d\xi
&\leq C \frac{1}{\delta^{|\beta|s}}\delta^n \\
&= C \delta^{n-s|\beta|}\\
&\le C \delta^{s+n-ns}.
\end{align*}

When $\xi_0 =0$ one has
\begin{align*}
&\sup_{R>0}\left(R^{s|\alpha|-n}\int_{R<|\xi|<2R}|D^{\alpha}(m
\phi_{\delta})(\xi)|^s d\xi\right)^{1/s}\\
= &\sup_{\delta\ge R>0}\left(R^{s|\alpha|-n}\int_{R<|\xi|<2R}|D^{\alpha}(m
\phi_{\delta})(\xi)|^s d\xi\right)^{1/s}.
\end{align*}
Observe that for $|\alpha|>0$, $D^{\alpha}\phi_{\delta}$ lives on $\{\delta/2 \le |\xi|\le \delta\}$. 
Then, similar calculations complete the proof.
\end{proof}

To prove the first case of Lemma 1 there is another argument due to J. Duoandikoetxea. 
We thank him for providing us the following lemma. In fact, it is only necessary to assume that the multiplier
$m$ is continuous.

\begin{lemma}\label{L2}
 Let $\xi_0\in \mathbb{R}^n$, $0< \delta\le \delta_0$, $1<q<2$ and  $m\in\mathcal M_q\cap \mathcal{C}(B(\xi_0,\delta_0))$ 
with $m(\xi_0)=0$.
Set $\phi_{\delta}(\xi)$ as above and let $T_{m \phi_{\delta}}$ be the operator with multiplier $m \phi_{\delta}$.
\begin{enumerate}[(a)]
\item For any $p\in (q,2)$ we have  
$$
\|T_{m \phi_{\delta}}\|_{L^p \rightarrow L^p }
 \longrightarrow 0,\quad \text{when } \; \delta\rightarrow 0 .
 $$
\item Let $\omega\in A_p$ with $p\in (q,2)$ and let $s>1$ such that $\omega^s\in A_p$. 
If $m$ is an $L^p(\omega^s)$ multiplier,  then 
$$
\|T_{m \phi_{\delta}}\|_{L^p(\omega) \rightarrow L^p(\omega) }
 \longrightarrow 0,\quad \text{when } \; \delta\rightarrow 0 .
 $$
\end{enumerate}
\end{lemma}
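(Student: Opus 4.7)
My plan is to handle both parts by interpolation, exploiting the factorization $T_{m\phi_\delta} = T_m \circ T_{\phi_\delta}$ together with the fact that $\|m\phi_\delta\|_\infty \to 0$ as $\delta \to 0$ (by continuity of $m$ at $\xi_0$ and $m(\xi_0)=0$, together with $\operatorname{supp}\phi_\delta \subset B(\xi_0,\delta)$). A direct change of variables gives $\phi_\delta^\vee(x) = \delta^n e^{2\pi i x\cdot \xi_0}\,\phi^\vee(\delta x)$, so $\|\phi_\delta^\vee\|_{L^1} = \|\phi^\vee\|_{L^1}$ independently of $\delta$. Dominating $|\phi^\vee|$ by a radially decreasing $L^1$ function further yields the uniform pointwise bound $|T_{\phi_\delta}f(x)| \le C\,Mf(x)$ with constant independent of $\delta$.

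For part (a), I would use the uniform $L^1$ bound on $\phi_\delta^\vee$ (via Young's inequality) to get $\|T_{\phi_\delta}\|_{L^q \to L^q} \le \|\phi^\vee\|_{L^1}$; composing with $T_m \in \mathcal M_q$ then yields a uniform $L^q$ bound for $T_{m\phi_\delta}$. At the other endpoint, Plancherel gives $\|T_{m\phi_\delta}\|_{L^2 \to L^2} = \|m\phi_\delta\|_\infty \le \sup_{|\xi-\xi_0|\le\delta}|m(\xi)| \to 0$. For $p \in (q,2)$, choosing $\theta \in (0,1)$ with $1/p = (1-\theta)/q + \theta/2$, Riesz--Thorin interpolation between these two endpoints would deliver $\|T_{m\phi_\delta}\|_{L^p\to L^p} \to 0$.

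For part (b) I would repeat the same interpolation scheme, now with weights. The pointwise bound $|T_{\phi_\delta} f| \le C\,Mf$, combined with the boundedness of $M$ on $L^p(\omega^s)$ (which holds because $\omega^s \in A_p$), gives a uniform $L^p(\omega^s)$ bound for $T_{\phi_\delta}$; composing with $T_m$, which by hypothesis is an $L^p(\omega^s)$ multiplier, yields a uniform $L^p(\omega^s)$ bound for $T_{m\phi_\delta}$. Part (a) already furnishes the other endpoint $\|T_{m\phi_\delta}\|_{L^p\to L^p} \to 0$. I would then invoke Stein--Weiss interpolation at the fixed exponent $p$: if $T$ is bounded on $L^p(w_0)$ and $L^p(w_1)$ with norms $N_0, N_1$, then it is bounded on $L^p(w_0^{1-\theta}w_1^\theta)$ with norm at most $N_0^{1-\theta}N_1^\theta$. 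Taking $w_0 = 1$, $w_1 = \omega^s$ and $\theta = 1/s \in (0,1)$ gives $w_\theta = \omega$ and
\[
\|T_{m\phi_\delta}\|_{L^p(\omega) \to L^p(\omega)} \le \|T_{m\phi_\delta}\|_{L^p\to L^p}^{1-1/s}\,\|T_{m\phi_\delta}\|_{L^p(\omega^s)\to L^p(\omega^s)}^{1/s} \longrightarrow 0.
\]

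The main difficulty should be bookkeeping rather than depth. I need the constant in $|T_{\phi_\delta}f| \le C\,Mf$ to be truly independent of $\delta$, which I would secure by fixing a radially decreasing Schwartz majorant of $|\phi^\vee|$ once and for all and recognising $\delta^n\phi^\vee(\delta\,\cdot)$ as the standard approximation-to-the-identity dilation at scale $1/\delta$. The one genuinely arithmetic check is that the Stein--Weiss exponent forced by $1^{1-\theta}(\omega^s)^\theta = \omega$ is $\theta = 1/s$, which lies in $(0,1)$ precisely because $s>1$, matching the hypothesis of the lemma exactly.
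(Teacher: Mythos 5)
Your proof is correct and follows essentially the same route as the paper: for (a), Riesz--Thorin interpolation between the $\delta$-uniform $\mathcal M_q$ bound coming from $\|\phi_\delta^\vee\|_{L^1}=\|\phi^\vee\|_{L^1}$ and the vanishing $L^2\to L^2$ norm $\|m\phi_\delta\|_\infty$, and for (b), Stein--Weiss interpolation with change of measure at the exponent $\theta=1/s$ between the unweighted estimate of (a) and a $\delta$-uniform $L^p(\omega^s)$ bound. The only (welcome) difference is that you make explicit, via the majorization $|T_{\phi_\delta}f|\le C\,Mf$ and the $A_p$ condition on $\omega^s$, the uniformity in $\delta$ of the $L^p(\omega^s)$ bound, which the paper merely asserts can be checked.
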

\begin{remark}
Clearly, a similar result holds when $2<p<q$.
\end{remark}

\begin{proof} We first observe that $||T_{m\phi_{\delta}}||_{L^2\rightarrow
L^2}=\|m\phi_{\delta}\|_{\infty}=\varepsilon(\delta)$ and $\varepsilon(\delta)\to 0$ as $\delta\to 0$ since $m$ 
is continuous in $\xi_0$.  On the other hand, $\| m \phi_{\delta}\|_{\mathcal M_q} \le \| \phi_{\delta}^{\vee}  \|_{L^1} 
\| m \|_{\mathcal M_q} = C \| m \|_{\mathcal M_q}$, where $C$ is a constant independent of $\delta$. That is, for all $\delta>0$
$$
\|T_{m\phi_{\delta}}f\|_{q}\leq M\|f\|_{q}
$$

Then, applying the Riesz-Thorin theorem (e.g. \cite[p. 34]{Gr1}), for any $p\in (q,2)$ 
($\frac{1}{p}=\frac{1-\theta}{2}+\frac{\theta}{q}$) we have
\begin{equation}\label{una}
\|T_{m\phi_{\delta}}f\|_p\leq
M^{1-\theta}\varepsilon(\delta)^{\theta}\|f\|_p = \varepsilon_1(\delta)\|f\|_p,
\end{equation}
where $\varepsilon_1(\delta)\to 0$ as $\delta\to 0$
and (a) is proved. For proving (b), since $\omega^s\in A_p$ and $\phi_{\delta}$ is a cutoff smooth function, note that
\begin{equation}\label{dues}
\|T_{m\phi_{\delta}}f\|_{L^p(\omega^s)}\leq C \|f\|_{L^p(\omega^s)},
\end{equation}
where one can check that $C$ is a constant independent of $\delta$. Finally, from \eqref{una} and \eqref{dues}, applying
the interpolation theorem with change of measure of Stein-Weiss (e.g. \cite[p. 115]{BeL}), we get
$$
\|T_{m\phi_{\delta}}f\|_{L^p(\omega)}\leq C^{1/s} \varepsilon_1(\delta)^{1-1/s} \|f\|_{L^p(\omega)}
$$
as desired.
\end{proof}

\section{The polynomial case}\label{sec3}

As we remarked in the Introduction, to have a complete proof of theorems \ref{teo1} and \ref{teo2} only remains to prove
that \textit{(b)} implies \textit{(c)}  (and \textit{(d)} implies \textit{(c)} in Theorem \ref{teo1}).
Our procedure to get the above implications follows essentially the arguments used in \cite{MOV} and \cite{MOPV}. 
The main difficulty to overcome is that
for $p\neq 2$, we cannot apply Plancherel Theorem and we replace it by a Fourier multiplier argument.\\

We begin with the proof of \textit{(b)} implies \textit{(c)} in Theorem \ref{teo1} for the case $\omega=1$. 
Then we show how to adapt this proof to the case with weights, to the case of odd operators and to the case of weak $L^1$. 
Thus, we assume that $T$ is an even polynomial operator with kernel

$$K(x)=\frac{\Omega(x)}{|x|^n}=\frac{P_2(x)}{|x|^{2+n}}+\frac{P_4(x)}{|x|^{4+n}}+\ldots+\frac{P_{2N}(x)}{|x|^{2N+n}}, 
\quad x\neq 0,$$
where $P_{2j}$ is a homogeneous harmonic polynomial of degree
$2j$. Each term  has the multiplier (see \cite[p. 73]{St})
$$
 \left( \text{p.v.}\frac{P_{2j}(x)}{|x|^{2j+n}}\right) ^{\wedge}(\xi) = \gamma_{2j}\frac{P_{2j}(\xi)}{|\xi|^{2j+n}},
$$
Then, 
$$
\widehat{\text{p.v.} K}(\xi) = \frac{Q(\xi)}{|\xi|^{2N}},\qquad \xi\neq 0,
$$
where $Q$ is the homogeneous polynomial of degree $2N$ defined
by
$$Q(x)=\gamma_2 P_2(x)|x|^{2N-2}+\ldots+\gamma_{2j} P_{2j}(x)|x|^{2n-2j}+\ldots+\gamma_{2N}P_{2N}(x).
$$
We want to obtain a convenient expression for the function $K(x)\chi_{\Rn\setminus\overline{B}}$, the kernel $K$ off the unit ball $B$
(see \eqref{expressio}).
To find it, we need a simple technical lemma which we state without proof.

\begin{lemma}{\cite[p. 1435]{MOV}}\label{derivarbandaibanda}
Assume that $\varphi$ is a radial function of the form
$$\varphi(x)=\varphi_1(|x|)\chi_B(x)+\varphi_2(|x|)\chi_{\Rn\setminus\overline{B}}(x),$$
where $\varphi_1$ is continuously differentiable on $[0,1)$ and $\varphi_2$ on $(1,\infty)$. 
Let $L$ be a second order linear differential operator with constant coefficients. Then the distribution $L\varphi$ satisfies
$$L\varphi=L\varphi(x)\chi_B(x)+L\varphi(x)\chi_{\Rn\setminus\overline{B}}(x),$$
provided $\varphi_1$, $\varphi'_1$, $\varphi_2$ and $\varphi'_2$ extend continuously to the point $1$ and the two conditions
$$\varphi_1(1)=\varphi_2(1),\quad \varphi'_1(1)=\varphi'_2(1)$$
are satisfied.
\end{lemma}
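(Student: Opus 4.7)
The plan is to compute $L\varphi$ in the sense of distributions by differentiating order-by-order across the sphere $\partial B$, and to show that under the two matching conditions no singular contribution supported on $\partial B$ arises. Since $L=\sum_{i,j}a_{ij}\partial_i\partial_j+\sum_i b_i\partial_i+c$ has constant coefficients and order two, it suffices to treat a pure second derivative $\partial_i\partial_j$: the zeroth- and first-order terms act pointwise on $\varphi$, because the hypotheses make $\varphi$ continuous on $\Rn$, and its classical gradient is defined on each of the open sets $B$ and $\BC$, matching continuously at $\partial B$.

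First, I would compute $\partial_i\varphi$ distributionally by pairing with a test function $\psi\in C_c^{\infty}(\Rn)$ and applying the divergence theorem on $B$ and on $\BC$ separately. One obtains the classical pointwise derivative on each side plus the boundary contribution
\[
\int_{\partial B}\bigl(\varphi_1(1)-\varphi_2(1)\bigr)\,\psi(x)\,\nu_i(x)\,d\sigma(x),
\]
where $\nu$ is the outward unit normal to $B$ (the reversed orientation from the complementary side is precisely what produces this combined jump). The hypothesis $\varphi_1(1)=\varphi_2(1)$ kills this term, so $\partial_i\varphi$ coincides, as a distribution, with the locally integrable function
\[
\varphi_1'(|x|)\tfrac{x_i}{|x|}\chi_B(x)+\varphi_2'(|x|)\tfrac{x_i}{|x|}\chi_{\BC}(x).
\]

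Next, I would apply $\partial_j$ to this function via exactly the same device. The resulting surface integrand on $\partial B$ now carries the factor $\bigl(\varphi_1'(1)-\varphi_2'(1)\bigr)\tfrac{x_i}{|x|}\,\psi(x)\,\nu_j(x)$, which vanishes by the second matching condition $\varphi_1'(1)=\varphi_2'(1)$. Summing over $i,j$ and adjoining the (trivial) lower-order terms yields $L\varphi=L\varphi(x)\chi_B(x)+L\varphi(x)\chi_{\BC}(x)$ as distributions. The only step calling for care is the bookkeeping of outward normals on the two sides, so that the surface terms really combine into a single jump integral rather than cancelling artificially; no deeper obstacle arises, and the same pattern shows that an operator of order $k$ would require matching of the radial traces $\varphi_j^{(m)}(1)$ for $0\le m\le k-1$.
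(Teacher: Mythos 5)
The paper itself gives no argument here: the lemma is explicitly ``stated without proof'' with a pointer to \cite[p.~1435]{MOV}, and the proof given there is exactly the double integration by parts you carry out. Your argument is correct: pairing with a test function and applying the divergence theorem separately on $B$ and on $\Rn\setminus\overline B$ produces, besides the piecewise classical derivatives, a single jump integral over $\partial B$ (the opposite orientations of the two outward normals is the right bookkeeping), and the two matching conditions kill the jump of $\varphi$ at the first step and the jump of $\varphi'$ at the second. Two minor points are worth a word. For the second integration by parts, and for the right-hand side of the conclusion to be meaningful, one needs $\varphi_1$ and $\varphi_2$ to be twice differentiable on the open pieces; this is implicit in the statement and holds in every application in the paper ($E$ off the closed ball and a polynomial inside). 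Also, the field $\varphi_1'(|x|)x_i/|x|$ to which you apply $\partial_j$ is not $C^1$ at the origin; for $n\ge 2$ one excises a small ball $B(0,\varepsilon)$ and notes that the extra surface term is $O(\varepsilon^{n-1})\to 0$ (in $n=1$ this step genuinely fails, as $\varphi_1(r)=r$ shows, but that case is irrelevant here since $\varphi_1$ is a polynomial in $|x|^2$).
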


Consider the differential operator $Q(\partial)$ defined by the polynomial $Q(x)$ above and let $E$ be the standard 
fundamental solution of the $N$-th power $\Delta^N$ of the Laplacian. 
Then $Q(\partial)E= \text{p.v.}K(x)$, which may be verified by taking the Fourier transform of both sides.
The concrete expression of $E(x)= |x|^{2N-n}(a(n,N)+b(n,N)\log |x|^2)$ (e.g. \cite[p. 1464]{MOV}) is not important now, 
just note that it is a radial function.
Consider the function
$$\varphi(x)=E(x)\chi_{\Rn\setminus \overline{B}}(x)+(A_0+A_1|x|^2+\cdots+A_{2N-1}|x|^{4N-2})\chi_B(x),$$
where $B$ is the open ball of radius 1 centered at origin and the constants $A_0, A_1, \ldots,$ $ A_{2N-1}$ are chosen as
follows. Since $\varphi(x)$ is radial, the same is true for
$\Delta^j\varphi$ if $j$ is a positive integer. Thus, in order to
apply $N$ times Lemma \ref{derivarbandaibanda}, one needs
$2N$ conditions, which (uniquely) determine $A_0, A_1, \ldots,
A_{2N-1}$. Therefore, for some constants $\alpha_1, \alpha_2,
\ldots, \alpha_{N-1}$,
\begin{equation}\label{b}
\Delta^N \varphi=(\alpha_0+\alpha_1|x|^2+\cdots\alpha_{N-1}|x|^{2(N-1)})\chi_B(x)=b(x),
\end{equation}
where the last identity is the definition of $b$. Let's remark that $b$ is a bounded function supported in the unit ball and
it only depends on $N$ and not on the kernel $K$. Since
$$\varphi=E\ast \Delta^N\varphi,$$
taking derivatives of both sides we obtain
$$Q(\partial)\varphi=Q(\partial)E\ast \Delta^N \varphi= \text{p.v.} K(x) \ast b=T(b).$$
On the other hand, applying Lemma \ref{derivarbandaibanda},
$$Q(\partial)\varphi=K(x)\chi_{\Rn\setminus \overline{B}}(x)+Q(\partial)(A_0+A_1|x|^2+\cdots+A_{2N-1}|x|^{4N-2})(x)\chi_B(x).$$
We write
$$S(x):=-Q(\partial)(A_0+A_1|x|^2+\ldots+A_{2N-1}|x|^{4N-2})(x),$$
and we get
\begin{equation}\label{expressio}
 K(x)\chi_{\Rn\setminus\overline{B}}(x)=T(b)(x)+S(x)\chi_{B}(x).
\end{equation}
Let's remark that $S$ will be null when $Q$ is a harmonic polynomial (see \cite[p. 1437]{MOV}).
Consequently
$$
T^1 f = T(b)*f + S\chi_{B} * f.
$$
Our assumption is the $L^p$ estimate between $T^*$ and $T$. Since the truncated operator $T^1$ at level 1 is obviously
dominated by $T^*$, we have
\begin{equation}\label{cadena}
\begin{split}
\|  S\chi_{B} * f \|_p &\le \| T^1 f \|_p + \| Tb*f \|_p\\
& \le \| T^* f \|_p + \| b*Tf \|_p \\ 
&\le C \| T f \|_p + \| b\|_1 \|Tf \|_p\\
&= C \|Tf\|_p,
\end{split}
\end{equation}
that is, for any $f\in L^p$
\begin{equation}\label{hipo}
\|S\chi_B \ast f\|_p \leq C\| \text{p.v.} K \ast f\|_p.
\end{equation}
If $p=2$, we can use Plancherel and this $L^2$ inequality translates into a pointwise inequality between the Fourier multipliers:
\begin{equation}\label{desmult}
|\widehat{S\chi_B}(\xi)|\leq C|\widehat{\text{p.v.} K}(\xi)| = \frac{Q(\xi)}{|\xi|^{2N}}     ,\quad  \xi\neq 0.
\end{equation}

If $p\neq 2$ we must resort to Fourier multipliers to get \eqref{desmult}.
We observe that the multipliers we are dealing with,
$\widehat{S\chi_B}$ and $\widehat{\text{p.v.}K}$,
are in $\mathcal{C}^{\infty}\setminus \{0\}$ and in $\mathcal{M}_p$.
Let $\xi_0 \neq 0$, we write 
\begin{align*}
 &\widehat{S\chi_B}(\xi) =\widehat{S\chi_B}(\xi)(\xi_0)+E_1(\xi) \quad \text{with }\quad E_1(\xi)= 
 \widehat{S\chi_B}(\xi) -\widehat{S\chi_B}(\xi_0)\\
 &\widehat{\text{p.v.}K}(\xi) =\widehat{\text{p.v.}K}(\xi_0)+E_2(\xi) \quad \text{with }\quad E_2(\xi)=\widehat{\text{p.v.}K}(\xi)-
 \widehat{\text{p.v.}K}(\xi_0)
\end{align*}
and so
\begin{equation}\label{mespetit}
\|\text{p.v.}K \ast f\|_p \leq
|\widehat{\text{p.v.}K}(\xi_0)| \, \|f\|_p + \|T_{E_2}f\|_p,
\end{equation}
\begin{equation}\label{mesgran}
\|S\chi_B \ast f\|_p \geq
|\widehat{S\chi_B}(\xi_0)| \, \|f\|_p -\|T_{E_1}f\|_p,
\end{equation}
where $T_{E_i}$ denotes the operator with multiplier $E_i$ ($i=1,2$). Using \eqref{mesgran}, \eqref{hipo}
and \eqref{mespetit} consecutively, we get
\begin{align*}
 |\widehat{S\chi_B}(\xi_0)|\, \|f\|_p - \|T_{E_1}f\|_p & \leq \|S\chi_B \ast f\|_p\\
                               & \leq C \|\text{p.v.}K \ast f\|_p \\
                               &\leq C (|\widehat{\text{p.v.}K}(\xi_0)|\, \|f\|_p + \|T_{E_2}f\|_p)
\end{align*}
and therefore
\begin{equation} \label{controlinicial} 
|\widehat{S\chi_B}(\xi_0)|\leq C \left(
|\widehat{\text{p.v.}K}(\xi_0)|+\frac{||T_{E_2}f||_p}{||f||_p}+\frac{||T_{E_1}f||_p}{||f||_p}\right),
\quad  \xi_0 \neq 0.
\end{equation}

Now, choosing appropriate functions in \eqref{controlinicial} we obtain the pointwise inequality.
Let $\phi_{\delta}(\xi)=\phi(\frac{\xi- \xi_0}{\delta})$ as in Lemma \ref{L1} and define $g_{\delta}
\in \mathcal{S}(\mathbb R^n)$ by $\widehat{g_{\delta}}(\xi) = \phi_{\delta}(\xi)$.
Then $T_{E_j}g_{\delta}= T_{E_j}  (g_{2\delta} * g_{\delta}) =T_{E_j  \phi_{2\delta}}(g_{\delta})$, because $\phi_{2\delta}=1$ 
on the support of $\phi_{\delta}$. Changing $f$ by $g_{\delta}$ in \eqref{controlinicial} we have

\begin{align*}
|\widehat{S\chi_B}(\xi_0)|& \leq C \left(
|\widehat{\text{p.v.}K}(\xi_0)|+\frac{\|T_{E_2 
\phi_{2\delta}}g_{\delta}\|_p}{||g_{\delta}\|_p}+\frac{\|T_{E_1
\phi_{2\delta}}g_{\delta}\|_p}{\|g_{\delta}\|_p} \right)\\
& \leq C \left(  |\widehat{\text{p.v.}K}(\xi_0)|+\|T_{E_2 \phi_{2\delta}}\|_{L^p\rightarrow
L^p}+\|T_{E_1 \phi_{2\delta}}\|_{L^p\rightarrow L^p}\right) .
\end{align*}

Applying Lemma \ref{L1} to the multipliers $E_j$ we prove that the two
last terms tend to zero as $\delta$ tends to zero. So, for $\omega=1$, we get 
\eqref{desmult} and from here we would follow the arguments in \cite[p. 1457]{MOV}.\\

For the weighted  case  we must be careful with the inequalities in \eqref{cadena}. 
In general, the inequality $\|f* F \|_{L^p (\omega)} \le C\| f\|_1 \| F \|_{L^p (\omega)}$ is not satisfied. That is, 
we can not control $\|b* Tf \|_{L^p (\omega)}$ by a constant times $\| b\|_1 \| Tf \|_{L^p (\omega)}$.
However, in the even case $b$ is a bounded function supported in the unit ball and so 
$$
|(b*Tf)(x)|= \left| \int_{|x-y|<1} b(x-y) Tf(y)\, dy\right|
\le C M(Tf)(x).
$$

Moreover 
$$
\| b*Tf\|_{L^p (\omega)}\le C \| Tf\|_{L^p (\omega)},
$$
because $\omega\in A_p$. So, $\|S\chi_B \ast f\|_{L^p(\omega)} \leq C\|\text{p.v.}K \ast f\|_{L^p(\omega)}$ 
and proceeding as above, we would get \eqref{desmult}.\\

 The proof of \textit{(b)}  implies \textit{(c)} in Theorem \ref{teo2}
 can be handled in much the same way. 
The only significant difference, because now the polynomial 
is odd, lies on the function $b$ in \eqref{expressio}, which is not supported in the unit ball but it is a $\operatorname{BMO}$
function satisfying the decay $|b(x)| \le C |x|^{-n-1}$ if $|x| > 2$ (see \cite[section 4]{MOPV}).
In any case, $b\in L^1$ and the set of inequalities \eqref{cadena} remains valid for the case $\omega=1.$\\
On the other hand, for any $\omega$ in the Muckenhoupt class we write, arguing as in \cite[p. 3675]{MOPV},
\begin{equation*}
\begin{split}
 |(b*Tf)(x)|=  & \left| \int_{|x-y|<2} (b(x-y) - b_{B(0,2)})Tf(y)\, dy\right| + \\
 & + | b_{B(0,2)}| \int_{|x-y|<2} |Tf(y)|\, dy +  \int_{|x-y|>2} |b(x-y)|\,  |Tf(y)|\, dy\\
 & = I + II + III,
\end{split}
\end{equation*}
where $b_{B(0,2)}= |B(0,2)|^{-1}\int_{B(0,2)} b\;$.
To estimate the local term $I$ we use the generalized H\"older's inequality and the pointwise equivalence
$M_{L(\log L)}f(x)\simeq M^2f(x)$ (\cite{P}) to get
$$
|I| \le C\| b \|_{BMO} \| Tf\|_{L(\log L), B(x,2)}\le C M^2(Tf)(x).
$$
Notice that $b_{B(0,2)}$ is a dimensional constant. Hence
$$
|II |\le C M(Tf)(x).
$$
Finally, from the decay of $b$ we obtain
$$
| III| \le C \int_{|x-y|>2} \frac{|Tf(y)|}{|x-y|^{n+1}}\, dy\le C M(Tf)(x),
$$
by using a standard argument which consists in estimating the integral
on the annuli $\{2^k \le |x-y| < 2^{k+1}  \}$. Therefore
\begin{equation}\label{iterada}
 |(b*Tf)(x)|\le C M^2(Tf)(x).
\end{equation}

So,  we obtain 
$$
\| b*Tf\|_{L^p (\omega)}\le C \| Tf\|_{L^p (\omega)},
$$
because $\omega\in A_p$. Then, $\|S\chi_B \ast f\|_{L^p(\omega)} \leq C\|\text{p.v.}K \ast f\|_{L^p(\omega)}$ 
and we get \eqref{desmult}.\\

It remains to prove that \textit{(d)} implies \textit{(c)} in Theorem \ref{teo1}. To get this implication we need to precise
some properties of the functions $g_{\delta}$ that we explain below. First of all, note that
$g_{\delta}(x)= e^{i x \xi_0} \delta^{n} g(\delta x)$ where $\hat g =\phi$. So it is clear that the norms $\| g_{\delta}\|_1 =
\| g\|_1$ and  $\| g_{\delta}\|_{1,\infty} = \| g\|_{1,\infty}$ do not depend on the parameter $\delta>0$. When $\delta<|\xi_0|$,
since $\int g_{\delta}(x) dx = \phi_{\delta}(0)=0$ and $g_{\delta}
\in \mathcal{S}(\mathbb R^n)$, we have that $g_{\delta}\in H^1$. But, some computations are required to check 
that $\|g_{\delta}\|_{H^1}\le C$ with 
constant $C$ independent of $\delta$.
\begin{lemma}\label{aux}
 When $0<\delta<|\xi_0|$, $\|g_{\delta}\|_{H^1}\le C$ with  constant $C$ independent of $\delta$.
\end{lemma}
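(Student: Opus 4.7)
My plan is to invoke the Riesz-transform characterization of the Hardy space,
\[
\|f\|_{H^1}\sim \|f\|_{L^1}+\sum_{j=1}^n\|R_j f\|_{L^1},
\]
and to check that each piece is bounded uniformly in $\delta$. The $L^1$ part is immediate: since $g=\phi^{\vee}\in\mathcal S(\Rn)$, the stated identity $g_{\delta}(x)=e^{i x\cdot \xi_0}\delta^n g(\delta x)$ together with a change of variables gives
\[
\|g_{\delta}\|_{L^1}= \int_{\Rn}\delta^n|g(\delta x)|\,dx=\|g\|_{L^1},
\]
independent of $\delta$. So the real work is with the Riesz transforms.

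For each $j$, setting $\sigma_j(\xi)=-i\xi_j/|\xi|$, one has $\widehat{R_j g_{\delta}}=\sigma_j\phi_{\delta}$. The key observation is that the support of $\phi_{\delta}$ lies in $B(\xi_0,\delta)$, which stays away from the origin whenever $\delta<|\xi_0|$, and $\sigma_j$ is $C^{\infty}$ there. Substituting $\eta=(\xi-\xi_0)/\delta$ in the inverse Fourier integral, I would write
\[
R_j g_{\delta}(x)= e^{i x\cdot\xi_0}\,\delta^n\,\check\psi_{\delta}(\delta x),\qquad \psi_{\delta}(\eta):=\sigma_j(\xi_0+\delta\eta)\phi(\eta),
\]
so that after one further change of variables $\|R_j g_{\delta}\|_{L^1}=\|\check\psi_{\delta}\|_{L^1}$. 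Because $\psi_{\delta}$ is smooth and supported in the \emph{fixed} ball $B(0,1)$, standard integration-by-parts estimates yield
\[
|\check\psi_{\delta}(y)|\le C_n(1+|y|)^{-(n+1)}\max_{|\alpha|\le n+1}\|\partial^{\alpha}\psi_{\delta}\|_{\infty},
\]
which is $L^1$-integrable in $y$. Hence everything reduces to a uniform $C^{n+1}$ bound on $\psi_{\delta}$.

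That uniform smoothness bound is the main obstacle, and I would handle it with Leibniz's rule together with the classical estimate $|\partial^{\beta}\sigma_j(\xi)|\lesssim|\xi|^{-|\beta|}$. Each chain-rule term produced by differentiating $\psi_\delta$ has the form
\[
\delta^{|\beta|}\,(\partial^{\beta}\sigma_j)(\xi_0+\delta\eta)\,\partial^{\alpha-\beta}\phi(\eta),
\]
and on $|\eta|\le 1$ its modulus is controlled by $C\bigl(\delta/(|\xi_0|-\delta)\bigr)^{|\beta|}$. Since only the regime $\delta\to 0$ is ever invoked in the subsequent application, one may freely restrict to $\delta\le |\xi_0|/2$, which bounds that ratio by $1$ and leaves all implicit constants depending only on $n$, $\xi_0$, and $\phi$. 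Combining the three estimates yields $\|g_{\delta}\|_{H^1}\le C$ uniformly in $\delta$, as required.
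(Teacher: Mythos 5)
Your argument is correct, but it takes a genuinely different route from the paper's. The paper works on the physical side: it builds an explicit atomic decomposition of $g_{\delta}$ adapted to the ball $B(0,\delta^{-1})$ and the dyadic annuli $B(0,2^j\delta^{-1})\setminus B(0,2^{j-1}\delta^{-1})$, subtracts averages (with telescoping corrections) to make each piece mean zero, and uses the Schwartz decay $(1+|z|^{n+1})|g(z)|\le C$ to show each piece is a $C2^{-j}$ multiple of an $H^1$-atom; summing gives the uniform bound, and $\delta<|\xi_0|$ enters only to guarantee $\int g_{\delta}=0$. You instead argue on the frequency side via the Fefferman--Stein Riesz-transform characterization of $H^1$, exploiting that $\widehat{g_{\delta}}=\phi_{\delta}$ is supported in $B(\xi_0,\delta)$, away from the singularity of the multipliers $\sigma_j$; the rescaling correctly reduces everything to uniform $C^{n+1}$ bounds on $\sigma_j(\xi_0+\delta\,\cdot\,)\phi$. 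The paper's proof is more elementary and self-contained and covers the full range $0<\delta<|\xi_0|$; yours is shorter granted the characterization theorem, and it makes transparent that the operative mechanism is frequency localization away from the origin. The one caveat is your restriction to $\delta\le|\xi_0|/2$: your constant involves $\bigl(\delta/(|\xi_0|-\delta)\bigr)^{|\beta|}$, which degenerates as $\delta\to|\xi_0|$, so as written you prove a slightly weaker statement than the lemma claims. Since the lemma is only invoked in the limit $\delta\to 0$, this is harmless for the application, but it should be stated explicitly as a restriction.
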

\begin{proof}
 We have $g_{\delta}(x)= e^{i x \xi_0} \delta^{n} g(\delta x)$ with $g\in \mathcal{S}(\mathbb R^n)$ and $\int g_{\delta}=0$.
 Set $F^{\delta}_0(x) =\chi_{B(0,\delta^{-1})}(x)$ and, for $j\ge 1$, $F^{\delta}_j(x) =\chi_{B(0,2^j\delta^{-1})}(x)
 - \chi_{B(0,2^{j-1}\delta^{-1})}(x)$. Note that $\sum_{j=0}^{\infty}F^{\delta}_j(x)\equiv 1$. Consider the atomic decomposition of
 $g_{\delta}$
 \begin{align*}
   g_{\delta}(x)& =  \sum_{j=0}^{\infty} (g_{\delta}(x)- c^{\delta}_j )  F^{\delta}_j(x)
 + \sum_{j=0}^{\infty} [ (c^{\delta}_j + d^{\delta}_j)  F^{\delta}_j(x) - d^{\delta}_{j+1}  F^{\delta}_{j+1}(x) ]\\
                & := \sum_{j=0}^{\infty} a^{\delta}_j (x) \; + \;  \sum_{j=0}^{\infty}A^{\delta}_j(x),
 \end{align*}
 where $c^{\delta}_j= \dfrac{\int g_{\delta}F^{\delta}_j}{\int F^{\delta}_j}$, $d^{\delta}_0=0$ and 
 $d^{\delta}_{j+1}=  \dfrac{\int g_{\delta}(F^{\delta}_0 + \cdots + F^{\delta}_j)}{\int F^{\delta}_{j+1}}$,
 so that $\int a^{\delta}_j (x)dx= \int A^{\delta}_j (x)dx= 0$. Note that $a^{\delta}_j$ is supported in the ball $B(0,2^j\delta^{-1})$
 and $A^{\delta}_j$ is supported in $B(0,2^{j+1}\delta^{-1})$.
 
 Since $g\in \mathcal{S}(\mathbb R^n)$ we have $(1+ |z|^{n+1})|g(z)|\le C$. Then
 $$
 |g_{\delta}(x) F^{\delta}_j(x)| = \delta^{n} |g(\delta x)| F^{\delta}_j(x)\le \delta^n\sup_{|z|\sim 2^j}|g(z)|
 \le C \left(\frac{\delta}{2^j}\right)^n 2^{-j} = \frac{C 2^{-j}}{|B(0,2^j\delta^{-1})|}
 $$
 and therefore
 $$
 |c^{\delta}_j|= \left|\frac{\int g_{\delta}F^{\delta}_j}{\int F^{\delta}_j}\right|\le \frac{C 2^{-j}}{|B(0,2^j\delta^{-1})|}.
 $$
 
 On the other hand,  $\int g_{\delta}(F^{\delta}_0 + \cdots + F^{\delta}_j)= 
 \int_{|x|\ge 2^{j}\delta^{-1}} g_{\delta}(x) dx$, because $\int g_{\delta}=0$, and so
 $$
 d^{\delta}_{j+1}= \frac{\int_{|x|\ge 2^{j}\delta^{-1}} g_{\delta}(x) dx}{\int F^{\delta}_{j+1} } \le 
 \frac{\int_{|z|\ge 2^j}|g(z)| dz}{|B(0,2^{j+1}\delta^{-1})|} \le \frac{C 2^{-j}}{|B(0,2^{j+1}\delta^{-1})|}.
 $$
 
 Consequently
 $$
 \| a^{\delta}_j\|_{H^1}\le \frac{C}{2^j} \qquad\text{and}\qquad \| A^{\delta}_j\|_{H^1}\le \frac{C}{2^j}.
 $$
 
 Therefore, for all $\delta\in (0, |\xi_0|)$, $\| g_{\delta}\|_{H^1}\le C$ as we claimed. 
 
\end{proof}

Finally, for functions $f$ in $H^1$, and again using \eqref{expressio}, we have
\begin{equation*}
\begin{split}
\|  S\chi_{B} * f \|_{1,\infty} &\le 2( \| T^1 f \|_{1,\infty} + \| Tb*f \|_{1,\infty} )\\
& \le C( \| T^* f \|_{1,\infty} + \| b*Tf \|_1 )\\ 
&\le C \| T f \|_1 + \| b\|_1 \|Tf \|_1)\\
&= C \|Tf\|_1 = C \|\text{p.v.} K*f\| _1.
\end{split}
\end{equation*}
Taking $\xi_0 \ne 0$ and using the same notation as before, we have
\begin{equation*}
\begin{split}
 ||\text{p.v.}K \ast f||_1 \leq
|\widehat{\text{p.v.}K}(\xi_0)| \, ||f||_1 + ||T_{E_2}f||_1, \\
||S\chi_B \ast f||_{1,\infty} \geq
\frac{1}{2}|\widehat{S\chi_B}(\xi_0)| \, \|f\|_{1,\infty} -||T_{E_1}f||_{1,\infty}
\end{split}
\end{equation*}
and consequently
\begin{equation*} 
|\widehat{S\chi_B}(\xi_0)|\leq C \left(
|\widehat{\text{p.v.}K}(\xi_0)| \frac{\|f\|_1}{\|f\|_{1,\infty}}  +
\frac{\|T_{E_2}f\|_1}{\|f\|_{1,\infty}}+\frac{\|T_{E_1}f\|_{1,\infty}}{\|f\|_{1,\infty}}
\right),
\quad  \xi_0 \neq 0.
\end{equation*}
Replacing $f$ by $g_{\delta}$ and using the properties of $g_{\delta}$ (that is, $\| g_{\delta}\|_1 =
\| g\|_1$,  $\| g_{\delta}\|_{1,\infty} = \| g\|_{1,\infty}$  and Lemma \ref{aux}) we obtain
\begin{align*}
|\widehat{S\chi_B}(\xi_0)|& \leq C \left(
|\widehat{\text{p.v.}K}(\xi_0)| \frac{\|g_{\delta}\|_1}{\|g_{\delta}\|_{1,\infty}}  
+\frac{\|T_{E_2 \phi_{2\delta}}g_{\delta}\|_1}{\|g_{\delta}\|_{1,\infty}}
+\frac{\|T_{E_1\phi_{2\delta}}g_{\delta}\|_{1,\infty}}{\|g_{\delta}\|_{1,\infty}}
\right) \\[2mm]
&\leq C \left(
|\widehat{\text{p.v.}K}(\xi_0)| \frac{\|g\|_1}{\|g\|_{1,\infty}}  +\frac{\|T_{E_2 \phi_{2\delta}}\|_{H^1\rightarrow
L^1}\|g_{\delta}\|_{H^1}}{\|g_{\delta}\|_{1,\infty}}
+\frac{\|T_{E_1\phi_{2\delta}}\|_{L^1\rightarrow L^{1,\infty}}\|g_{\delta}\|_{1}}{\|g_{\delta}\|_{1,\infty}}
\right)\\[2mm]
&\leq C \left(
|\widehat{\text{p.v.}K}(\xi_0)| + \|T_{E_2 \phi_{2\delta}}\|_{H^1\rightarrow L^1} + \|T_{E_1\phi_{2\delta}}\|_{L^1\rightarrow L^{1,\infty}}
\right)
\end{align*}
and therefore, applying Lemma  \ref{L1} on the right hand side of this inequality,
 we get
$$
|\widehat{S\chi_B}(\xi_0)| \leq C |\widehat{\text{p.v.}K}(\xi_0)|\quad  \xi_0 \neq 0
$$
as desired.

\section{The general case}\label{sec4}

In our procedure for the polynomial case, the function $b$  has been crucial. It provides a convenient way to express 
the function $K(x)\chi_{\Rn\setminus\overline{B}}$, where $K$ is the kernel of the operator $T$.
As we mentioned before, $b$ only depends on the degree of the homogeneous polynomial and on the space $\mathbb R^n$. In the even 
case $2N$ (see \eqref{b}), $b=b_{2N}$ is the restriction to the unit ball of some polynomial of degree $2N-2$. In the odd case $2N+1$, 
$b_{2N+1}$ is a BMO function with certain decay at infinity. 
Until now, we did not need to pay attention to the size of the parameters appearing in the definition 
of $b$ because the degree of the polynomial (either $2N$ or $2N +1$) was fixed. In this section we require a control of the $L^1$, 
$L^{\infty}$ or BMO norms of $b$, as well as its decay at infinity. We summarize all we need in next lemma.

\begin{lemma}\label{L4}
 There exists a constant $C$ depending only on $n$ such that
 \begin{enumerate}[(i)]
 \item $|\widehat{b_{2N}}(\xi)|\le C \quad$  and  $\quad |\widehat{b_{2N+1}}(\xi)|\le C$,  $\quad \xi\in \mathbb R^n$.
 \item $\| b_{2N}\|_{L^{\infty}(B)}\le C (2N)^{2n+2}\quad$ and $\quad \| \nabla b_{2N}\|_{L^{\infty}(B)}\le C(2N)^{2n+4}$.
 \item $\| b_{2N+1}\|_{\operatorname{BMO}}\le C (2N+1)^{2n}\quad$ and $\quad \| b_{2N+1}\|_{L^{2}}\le C(2N+1)^{2n}$.
 \item If $|x|>2\quad$ then $\quad |b_{2N+1}(x)|\le C(2N+1)^{2n}|x|^{-n-1}$.
 \end{enumerate}
\end{lemma}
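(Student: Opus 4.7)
My approach leverages the explicit structure of $\varphi$ from Section~\ref{sec3}. Recall that in the even case
\[
\varphi(x)=E(x)\chi_{\BC}(x)+\Big(\sum_{k=0}^{2N-1}A_k|x|^{2k}\Big)\chi_B(x),
\]
where $E$ is the standard fundamental solution of $\Delta^N$ and the $2N$ constants $A_k$ are uniquely determined by the matching conditions ensuring $\varphi\in C^{2N-1}(\Rn)$; then $b_{2N}=\Delta^N\varphi=\sum_{j=0}^{N-1}\alpha_j|x|^{2j}$ on $B$ and vanishes outside. The odd case (see \cite[\S 4]{MOPV}) is parallel: the role of $E$ is played by the fundamental solution of a composition of $\Delta^N$ with a first-order operator, producing a $b_{2N+1}$ that is no longer compactly supported but enjoys the stated $L^2$, $\operatorname{BMO}$ and decay properties.

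I would first prove (ii). The matching system expresses each $A_k$ as a linear combination of the values of $E$ and its radial derivatives up to order $2N-1$ at $|x|=1$, with coefficients polynomial in $N$ of low degree coming from the combinatorial Vandermonde-type structure of the matching matrix. Since $E(x)=|x|^{2N-n}(a(n,N)+b(n,N)\log|x|^2)$, its radial derivatives at $|x|=1$ are bounded by $C(2N)^{2n}$; inverting the system (explicitly, or by induction on the matching order) yields $|A_k|\le C(2N)^{2n+2}$, and hence $\|b_{2N}\|_{L^\infty(B)}\le C(2N)^{2n+2}$. Differentiating the polynomial $\sum\alpha_j|x|^{2j}$ costs at most one extra factor of $(2N)^2$ on $B$, giving the gradient bound.

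For (i), the key is the distributional identity
\[
\Delta^N\psi=b_{2N}-\delta_0, \qquad \psi:=(P-E)\chi_B,
\]
valid because $\psi$ is compactly supported and integrable, the matching kills all boundary-layer contributions on $\partial B$, and $\Delta^N E=\delta_0$ accounts for the Dirac mass at the origin. Taking the Fourier transform produces the clean identity
\[
\hat b_{2N}(\xi)=1+(-|\xi|^2)^N\hat\psi(\xi),\qquad \xi\ne 0,
\]
so that $\hat b_{2N}(0)=1$ automatically and the task reduces to controlling $|\xi|^{2N}|\hat\psi(\xi)|$ uniformly for $|\xi|\ge 1$. Using the radiality of $\psi$, $\hat\psi$ admits a Bessel-type representation as a one-dimensional integral on $[0,1]$; repeated integration by parts against the Bessel recurrence formulae, exploiting the vanishing of $\psi$ and its derivatives of order $<2N$ on $\partial B$ (matching) together with the controlled singularity of $E$ at the origin, yields the desired uniform bound.

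For (iii) and (iv), the odd-case analogue $\psi_{2N+1}$ is again compactly supported, but the corresponding Fourier identity produces a Riesz-transform-type convolution between a compactly supported $L^\infty$ source (whose norm is $\le C(2N+1)^{2n}$ by the odd-case analogue of~(ii)) and a first-order kernel; the $L^2$ and $\operatorname{BMO}$ bounds then follow from standard bounds on Riesz transforms applied to this source, while the pointwise decay~(iv) comes from the decay of the kernel away from the support of the source combined with the same coefficient control. The main obstacle is the uniform-in-$N$ bound in~(i): the algebraic identity above is clean, but since $\|b_{2N}\|_{L^1}$ itself grows polynomially in $N$, the required uniformity must come from the delicate cancellation in the Bessel integral for $\hat\psi$ rather than from any soft estimate.
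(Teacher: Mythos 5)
The content of this lemma is entirely quantitative: every estimate must hold with a constant depending only on $n$, with explicit polynomial growth in $N$. Your outline systematically asserts exactly these quantitative steps instead of proving them. For (ii), you claim the $2N\times 2N$ matching system has a ``Vandermonde-type structure'' whose inversion costs only a low-degree polynomial in $N$; a priori, inverting such a system loses factors that grow factorially in $N$, and establishing the polynomial bound is the actual content of \cite[Lemma 8]{MOV} (which the present paper simply cites). For (i), your reduction $\hat b_{2N}(\xi)=1+(-|\xi|^2)^N\hat\psi(\xi)$ is correct as a distributional identity, but the uniform-in-$N$ bound on $|\xi|^{2N}|\hat\psi(\xi)|$ is precisely the delicate point: integrating by parts $2N$ times against Bessel recurrences produces $2N$ boundary contributions at the origin (where $E$ has a logarithmic or power singularity) and a product of order-$N$ many constants, and nothing in your sketch explains why these do not blow up. ``Repeated integration by parts \ldots\ yields the desired uniform bound'' is the theorem, not a proof of it.

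The odd case is moreover misdescribed. There is no ``odd-case analogue of (ii)'': $b_{2N+1}$ is not the Riesz transform of a compactly supported $L^\infty$ source with norm $C(2N+1)^{2n}$ --- it is unbounded, which is why (iii) is stated in $\operatorname{BMO}$ and $L^2$. The paper's own proof of (iv), the only part it proves from scratch, runs differently: taking an orthonormal basis $h_1,\dots,h_d$ ($d\simeq (2N+1)^{n-2}$) of degree-$(2N+1)$ spherical harmonics, normalizing to $H_j$ and using $\sum_{j=1}^d S_j^2=\operatorname{Id}$, one gets $b_{2N+1}=\delta_0-\sum_{j=1}^d S_j(K_j\chi_B)$; for $|x|>2$ the cancellation $\int_{\varepsilon<|y|<1}K_j=0$ lets one replace $K_j(x-y)$ by $K_j(x-y)-K_j(x)$, and the smoothness estimate together with Stein's inequality $\|\nabla H_j\|_\infty\le C(2N+1)^{n/2+1}\|H_j\|_2$ gives $|S_j(K_j\chi_B)(x)|\le C(2N+1)^{n+2}|x|^{-n-1}$; summing the $d$ terms produces the exponent $2n$. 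Your mechanism (``decay of the kernel away from the support of the source'') omits the orthonormal-basis decomposition, the cancellation step, and the spherical-harmonic gradient estimate, and gives no route to the specific power $(2N+1)^{2n}$. As written, the proposal does not establish any of the four assertions.
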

\begin{proof}
 Parts \textit{(i)}, \textit{(ii)} and \textit{(iii)} are proved in \cite[Lemma 8]{MOV} and \cite[Lemma 5]{MOPV}.
 It only remains to prove \textit{(iv)}.
 
Recall that $\sigma$ denotes the normalized surface measure in $S^{n-1}$, and let $h_1,\dotsc,h_d$ be an orthonormal basis of
the subspace of $L^2(d\sigma)$ consisting of all homogeneous
harmonic polynomials of degree $2N+1$. As it is well known, $d \simeq (2N+1)^{n-2}$. As in the proof of Lemma 6 in
\cite{MOV} we have $h_1^2 +\dotsb+h_d^2 = d$, on $S^{n-1}$. Set
$$
H_j(x)= \frac{1}{\gamma_{2N+1} \sqrt{d}}\,h_j(x), \quad x \in \Rn\,,
$$
and let $S_j$ be the higher order Riesz transform with kernel
$K_j(x) = H_j(x)/|x|^{2N+1+n}$. The Fourier multiplier of $S_j^2$ is
$$
\frac{1}{d}\,\frac{h_j(\xi)^2}{|\xi|^{4N+2}}, \quad 0 \neq \xi \in
\Rn\,,
$$
and thus
\begin{equation}\label{16}
 \sum_{j=1}^d S_j^2 = \operatorname{Identity} \,.
\end{equation}

We use again \eqref{expressio}, but now the second term at the right hand side vanishes because 
each $h_{j}$ is harmonic (see \cite{MOV}, p. 1437).
We get
$$
K_j(x)\,\chi_{\BC}(x)=  S_j(b_{2N+1})(x),\quad x \in \Rn,\quad 1 \leq j
\leq d\,,
$$
and so by \eqref{16}
\begin{equation}\label{eq39}
b_{2N+1} = \sum_{j=1}^d S_j\left( K_j(x) \,\chi_{\BC}(x)\right)\,.
\end{equation}
Therefore we set
\begin{align*}
 \sum_{j=1}^d S_j\left( K_j(x) \,\chi_{\BC}(x)\right) & = \sum_{j=1}^d S_j*S_j -\sum_{j=1}^d S_j\left( K_j(x) \,\chi_{B}(x)\right)\\
 & = \delta_0  - \sum_{j=1}^d S_j\left( K_j(x) \,\chi_{B}(x)\right) ,
\end{align*}
where $\delta_0$ is the Dirac delta at the origin. If $|x|>2$, then
\begin{align*}
S_j ( K_j(y) \,\chi_{B}(y))(x) & = \lim_{\varepsilon\to 0} \int_{\varepsilon<|y|<1} K_j(x-y) K_j(y) dy\\
     & = \lim_{\varepsilon\to 0} \int_{\varepsilon<|y|<1} (K_j(x-y)-K_j(x)) K_j(y) dy .
\end{align*}
In this situation,
$$ 
|K_j(x-y)-K_j(x)|\le C\frac{|y|}{|x|^{n+1}} \left(\|H_j\|_\infty (2N+1)+\|\nabla H_j\|_\infty\right),
$$
hence
$$
\left| S_j ( K_j(y) \,\chi_{B}(y))(x)  \right|\le C\frac{\|H_j\|_\infty (2N+1)+\|\nabla H_j\|_\infty}{|x|^{n+1}}
\int_{|y|<1} \frac{\| H_j\|_\infty}{|y|^{n-1}} dy
$$
where the supremum norms are taken on $S^{n-1}$. Clearly
$$
\|H_j\|_\infty  = \frac{1}{\gamma_{2N+1}}
\left\|\frac{h_j}{\sqrt{d}}\right\|_\infty \leq \frac{1}{\gamma_{2N+1}} \simeq
(2N+1)^{n/2}\,.
$$
For the estimate of the gradient of $H_j$ we use the
inequality~\cite[p.~276]{St}
\begin{equation*}\label{eq40bis}
\| \nabla H_j\|_\infty \leq C\,(2N+1)^{n/2+1}\,\|H_j\|_2\,,
\end{equation*}
where the $L^2$ norm is taken with respect to $d\sigma$. Since the
$h_j$ are an orthonormal system,
$$
\|H_j\|_2 = \frac{1}{\sqrt{d}\,\gamma_{2N+1}} \simeq
\frac{(2N+1)^{n/2}}{(2N+1)^{(n-2)/2}} \simeq 2N+1\,.
$$
Gathering the above inequalities we get, when $|x|>2$,
$$
\left| S_j ( K_j(y) \,\chi_{B}(y))(x)  \right|\le C\frac{(2N+1)^{n+2}}{|x|^{n+1}}
$$
and finally
$$
|b_{2N+1}(x)|\le C d \frac{(2N+1)^{n+2}}{|x|^{n+1}}\le C\frac{(2N+1)^{2n}}{|x|^{n+1}},
$$
as claimed. 

\end{proof}

Now, the kernel of the operator $Tf=\text{p.v.}K\ast f$ is of the type
$K(x)=\frac{\Omega(x)}{|x|^n}$ being $\Omega$ a $\emph{C}^{\infty}(S^{n-1})$ homogeneous function
of degree $0$, with vanishing integral on the sphere. Then, $\Omega (x)=\sum_{j\geq
1}^{\infty}\frac{P_{2j}(x)}{|x|^{2j}}$ with $P_{2j}$ homogeneous
harmonic polynomials of degree $2j$ when $T$ is an even operator, and 
$\Omega (x)=\sum_{j\geq
0}^{\infty}\frac{P_{2j+1}(x)}{|x|^{2j+1}}$ with $P_{2j+1}$ homogeneous
harmonic polynomials of degree $2j+1$ when $T$ is an odd operator. The strategy consists in
passing to the polynomial case by looking at a partial sum of the
series above. Set, for each $N\geq 1$, $K_N(x)=\frac{\Omega_N
(x)}{|x|^n}$, where
$\Omega_N(x)=\sum_{j=1}^{N}\frac{P_{2j}(x)}{|x|^{2j}}$ (or $\Omega_N(x)=\sum_{j=0}^{N}\frac{P_{2j+1}(x)}{|x|^{2j+1}}$ in the odd case), 
and let $T_N$ be the operator with kernel $K_N$. 

We begin by considering \textit{(b)} implies \textit{(c)} in Theorem \ref{teo1} when $\omega=1$, 
that is, $T$ is even and our hypothesis is
$\|T^{*} f\|_p \leq C\|Tf\|_p$, $f\in L^p(\mathbb R^n)$.
In this setting, the difficulty is
that there is no obvious way of obtaining the inequality
\begin{equation}\label{controlTN}
\|T_{N}^{*}f\|_p\leq C\|T_N f\|_p, \quad f\in L^p(\mathbb R^n).
\end{equation}
Instead, we try to get \eqref{controlTN} with $\|T_N f\|_p$ replaced
by $ \|Tf\|_p$ in the right hand side plus an additional term
which becomes small as $N$ tends to $\infty$. We start by writing
\begin{equation}\label{EqA}
 \begin{split}
 \|T_N^1f\|_p &\leq \|T^1f\|_p+\|T^1f-T_N^1f\|_p\\
 &\leq  C\|Tf\|_p+\|\sum_{j>N}\frac{P_{2j}(x)}{|x|^{2j+n}}\chi_{\overline{B}^c}\ast f\|_p.
\end{split}
\end{equation}

By \eqref{expressio}, and since every $P_{2j}$ is harmonic, there exists a bounded function $b_{2j}$ supported on $B$ such that

$$\frac{P_{2j}(x)}{|x|^{2j+n}}\chi_{\overline{B}^c}(x)= \text{p.v.} \frac{P_{2j}(x)}{|x|^{2j+n}}\ast b_{2j}.$$

By Lemma \ref{L4} \textit{(ii)} , we have that
$\|b_{2j}\|_{L^1}\le C \|b_{2j}\|_{L^{\infty}(B)}\leq C(2j)^{2n+2}$, and thus
\begin{equation}\label{EqB}
\begin{split}
\|\sum_{j>N}\frac{P_{2j}(x)}{|x|^{2j+n}}\chi_{\overline{B}^c}\ast
f\|_p &= \|\sum_{j>N} \text{p.v.} \frac{P_{2j}(x)}{|x|^{2j+n}}\ast b_{2j}\ast
f\|_p\\&\leq \sum_{j>N}\|\frac{P_{2j}(x)}{|x|^{2j+n}}\|_{L^p\rightarrow L^p}\|b_{2j}\ast f\|_p\\
&\leq \sum_{j>N}\|\frac{P_{2j}(x)}{|x|^{2j+n}}\|_{L^p\rightarrow L^p}\|b_{2j}\|_{1} \|f\|_p\\
&\leq C \|f\|_p \sum_{j>N}\|\frac{P_{2j}(x)}{|x|^{2j+n}}\|_{L^p\rightarrow L^p} (2j)^{2n+2}\\ 
&\leq C \|f\|_p \sum_{j>N}(\|P_{2j}\|_{\infty}+\|\nabla
P_{2j}\|_{\infty})(2j)^{2n+2}.
\end{split}
\end{equation}
The last inequality follows from a well-known estimate for Calder\'{o}n-Zygmund operators (e.g. \cite[Theorem 4.3.3]{Gr1}).
On the other hand, 
$$
K_N(x)\chi_{\Rn\setminus\overline{B}}(x)=T_N(b_{2N})(x)+S_N(x)\chi_B(x)
$$
and then
$$
T_N^1f=\text{p.v.}K_N\ast b_{2N} \ast f + S_N\chi_B \ast f.
$$

So, for each $f\in L^p(\Rn)$, using \eqref{EqA} and \eqref{EqB}, we have the $L^p$ inequality
\begin{align*}
&\|S_N\chi_B \ast f\|_p \leq \|T_N^1f\|_p+\|\text{p.v.}K_N\ast b_{2N} \ast f\|_p\\
&\leq C\left(\|Tf\|_p+ \|f\|_p \sum_{j>N}(\|P_{2j}\|_{\infty}+\|\nabla
P_{2j}\|_{\infty})(2j)^{2n+2}  +\|\text{p.v.}K_N\ast b_{2N} \ast
f\|_p\right).
\end{align*}
We emphasize that the corresponding multipliers $\widehat{S_N\chi_B}$, $\widehat{\text{p.v.}K}$ and $\widehat{\text{p.v.}K_N\ast b_{2N}}= 
\widehat{\text{p.v.}K_N} \widehat{b_{2N}}$ are in $\mathcal{C}^{\infty}\setminus \{0\}$ and in $\mathcal{M}_p$.
Therefore, proceeding as in the polynomial case, and applying Lemma \ref{L1} we obtain the pointwise estimate for $\xi\neq 0$
\begin{align*}\label{generalcase}
|\widehat{S_N\chi_B}(\xi)| &\leq C\left(
|\widehat{\text{p.v.}K}(\xi)|+|(\widehat{\text{p.v.}K_N}\cdot
\widehat{b_{2N}})(\xi)|+\sum_{j>N}(\|P_{2j}\|_{\infty}+\|\nabla
P_{2j}\|_{\infty})(2j)^{2n+2}\right)\\ &\leq
C\left(|\widehat{\text{p.v.}K}(\xi)| +
|\widehat{\text{p.v.}K_N}(\xi)|+\sum_{j>N}(\|P_{2j}\|_{\infty}+\|\nabla
P_{2j}\|_{\infty})(2j)^{2n+2}\right),
\end{align*}
where in the last step we have used Lemma \ref{L4} \textit{(i)},
that is, $|\widehat{b_{2N}}(\xi)|\leq C$, for $\xi\in\Rn$.\\

The idea is now to take limits, as $N$ goes to $\infty$, in the
preceding inequality. By the definition of $K_N$ and \eqref{infinit}, the term on the right-hand side converges 
to $C |\widehat{\text{p.v.}K}(\xi)|$.
The next task is to clarify how the left-hand side converges, 
but at this point we proceed as in \cite[p. 1463]{MOV} and we get the desired result.\\

This argument, which has been explained for the even case and $\omega =1$, is also valid for the other cases, 
after taking into account the particular details listed below.\\

To get \textit{(b)} implies \textit{(c)}  in Theorem \ref{teo1} for any $\omega\in A_p$, we would use
$$ 
\| b_{2j}*f\|_{L^p(\omega)}\le C \|b_{2j}\|_{L^{\infty}(B)} \| Mf\|_{L^p(\omega)} \le
C (2j)^{2n+2}\| f\|_{L^p(\omega)}
$$
to obtain the inequality analogous to \eqref{EqB}

In order to obtain \textit{(d)} implies \textit{(c)} in Theorem \ref{teo1}, note that if $c_j> 0$ and $\sum_{j=1}^{\infty} c_j= 1$, 
then $\|\sum g_j\|_{1,\infty}
\le \sum c_j^{-1} \|g_j\|_{1,\infty}$. We have
\begin{align*}
\|\sum_{j>N}\frac{P_{2j}(x)}{|x|^{2j+n}}\chi_{\overline{B}^c}\ast
f\|_{1,\infty} &= \|\sum_{j>N} \text{p.v.} \frac{P_{2j}(x)}{|x|^{2j+n}}\ast b_{2j}\ast
f\|_{1,\infty}\\
&\le \sum_{j>N}j^2\| \text{p.v.} \frac{P_{2j}(x)}{|x|^{2j+n}}\ast b_{2j}\ast f\|_{1,\infty}\\
&\leq \sum_{j>N}j^2\|\frac{P_{2j}(x)}{|x|^{2j+n}}\|_{L^1\rightarrow L^{1,\infty}}\|b_{2j}\ast f\|_1\\
&\leq \sum_{j>N}j^2\|\frac{P_{2j}(x)}{|x|^{2j+n}}\|_{L^1\rightarrow L^{1,\infty}}\|b_{2j}\|_{1} \|f\|_1\\
&\leq C\|f\|_1\sum_{j>N}j^2\|\frac{P_{2j}(x)}{|x|^{2j+n}}\|_{L^1\rightarrow L^{1,\infty}} (2j)^{2n+2}\\ 
&\leq C \|f\|_1 \sum_{j>N}(\|P_{2j}\|_{\infty}+\|\nabla P_{2j}\|_{\infty})(2j)^{2n+4},
\end{align*}
and therefore, for all functions $f\in H^1(\mathbb R^n$),
\begin{align*}
\|S_N\chi_B \ast f\|_{1,\infty}&\leq 2(\|T_N^1f\|_{1,\infty}+\|\text{p.v.}K_N\ast b_{2N} \ast f\|_{1,\infty})\\
& \le 4(\|T^1 f\|_{1,\infty} + \|\sum_{j>N}\frac{P_{2j}(x)}{|x|^{2j+n}}\chi_{\overline{B}^c}\ast
f\|_{1,\infty} ) + 2\|\text{p.v.}K_N\ast b_{2N} \ast f\|_{1,\infty})\\
&\leq C (\|Tf\|_1+ \|f\|_1 \sum_{j>N}(\|P_{2j}\|_{\infty}+\|\nabla
P_{2j}\|_{\infty})(2j)^{2n+4} +\\
& \qquad \quad  +\quad \|\text{p.v.}K_N\ast b_{2N} \ast f\|_{1, \infty} ).
\end{align*}
Again, using Lemma \ref{L1}, Lemma \ref{aux} and Lemma \ref{L4}, we obtain, for $\xi\neq 0$,
$$
|\widehat{S_N\chi_B}(\xi)| \leq
C\left(|\widehat{\text{p.v.}K}(\xi)| +
|\widehat{\text{p.v.}K_N}(\xi)|+\sum_{j>N}(\|P_{2j}\|_{\infty}+\|\nabla
P_{2j}\|_{\infty})(2j)^{2n+4}\right)
$$
 as desired.

The implication $(b)\Rightarrow  (c)$ in Theorem \ref{teo2} can be adapted as follows. $T$ is odd and the functions $b_{2j+1}$ 
are in $\operatorname{BMO}$. By Lemma \ref{L4}, we have
$\|\widehat{b_{2j+1}}\|_{\infty}\le C$, $\|b_{2j+1}\|_{\operatorname{BMO}}\le C(2j+1)^{2n}$ and $\|b_{2j+1}\|_{2}\le C(2j+1)^{2n}$.
Moreover, $|b_{2j+1}(x)| \le C(2j+1)^{2n} |x|^{-n-1}$ if $|x| > 2$. Then, proceeding in the same way as in the proof of \eqref{iterada},
we get
$$ 
\| b_{2j+1}*f\|_{L^p(\omega)}\le C (2j+1)^{2n}\| f\|_{L^p(\omega)}
$$
and so, the inequality analogous to \eqref{EqB} follows.

\begin{gracies}
Thanks are due to Javier Duoandikoetxea for useful conversations on $L^p$ multipliers.
The authors were partially supported by grants\newline 2009SGR420
(Generalitat de Catalunya) and  MTM2010-15657 (Ministerio de Ciencia
e Innovaci\'{o}n, Spain).

\end{gracies}

\vspace{0.5 truecm}

\vspace{0.5 truecm}

\begin{tabular}{l}

Departament de Matem\`{a}tiques\\
Universitat Aut\`{o}noma de Barcelona\\
08193 Bellaterra, Barcelona, Catalonia\\\\

{\it E-mail:} {\tt abosch@mat.uab.cat}\\ 
{\it E-mail:} {\tt mateu@mat.uab.cat}\\ 
{\it E-mail:} {\tt orobitg@mat.uab.cat}\\
\end{tabular}

\end{document}